\DeclareMathOperator{\re}{\mathbb{R}e}
\DeclareMathOperator{\im}{\mathbb{I}m}
\renewcommand{\i}{\mathrm i}
\DeclareMathOperator{\Div}{div}
\DeclareMathOperator{\curl}{curl}
\newcommand{\INF}{{\infty}}
\newcommand{\tta}{\theta}
\newcommand{\OM}{\Omega}
\newcommand{\sph}{{{\mathbf S}^ 1}}
\newcommand{\del}{\partial}
\newcommand{\Gam}{\varGamma}
\newcommand{\ol}{\overline}
\newcommand{\ds}{\displaystyle}
\newcommand{\dba}{\overline{\partial}}
\newcommand{\BR}{\mathbb{R}}
\newcommand{\BC}{\mathbb{C}}
\newcommand{\BZ}{\mathbb{Z}}
\newcommand{\bu}{{\bf u}}
\newcommand{\bv}{{\bf v}}
\newcommand{\bw}{{\bf w}}
\newcommand{\bg}{{\bf g}}
\newcommand{\bF}{{\bf F}}
\newcommand{\bzero}{\mathbf 0}
\newcommand{\btheta}{\boldsymbol \theta}
\newcommand{\balpha}{\boldsymbol \alpha}
\newcommand{\bbeta}{\boldsymbol \beta}
\newcommand{\brho}{\mbox{\boldmath{$\rho$}}}
\newcommand{\jpj}{\langle j\rangle}
\newcommand{\B}{\mathcal{B}}
\newcommand{\HT}{\mathcal{H}}
\newcommand{\lnorm}[1]{ \left\| #1 \right\|}
\newcommand\numberthis{\addtocounter{equation}{1}\tag{\theequation}}
\newcommand{\Laplace}{\mathop{}\!\mathbin\bigtriangleup}
\newtheorem{theorem}{Theorem}[section]
\newtheorem{prop}{Proposition}[section]
\newtheorem{lemma}{Lemma}[section]
\newtheorem{remark}{Remark}[section]
\title[An inverse source problem for linearly anisotropic radiative sources ]
{An inverse source problem for linearly anisotropic radiative sources in absorbing and scattering medium}
\begin{document}
	\date{\today}
	
	\author{David Omogbhe}
	\address{Faculty of Mathematics, University of Vienna, Oskar-Morgenstern-Platz 1, 1090 Vienna, Austria}
	\email{david.omogbhe@univie.ac.at}
	\author{Kamran Sadiq}
	\address{Johann Radon Institute for Computational and Applied Mathematics (RICAM), Altenbergerstrasse 69, 4040 Linz, Austria}
	\email{kamran.sadiq@ricam.oeaw.ac.at}
	
	\subjclass[2010]{Primary 35J56, 30E20; Secondary 35R30, 45E05}
	
	
	
	\keywords{Radiative transport, anisotropic sources, source reconstruction, scattering, $A$-analytic maps,  Bukhgeim-Beltrami equation.
	}
	\maketitle
	
	\begin{abstract}
		We consider in a two dimensional absorbing and scattering medium, an inverse source problem in the stationary radiative transport, where the source is linearly anisotropic. 
		The medium has an anisotropic scattering property that is neither negligible nor large enough for the diffusion approximation to hold.
		The attenuating and scattering properties of the medium are assumed known. 
		For scattering kernels of finite Fourier content in the angular variable, we show how to  recover the anisotropic radiative sources from boundary measurements.  The approach is based on the Cauchy problem for a Beltrami-like equation associated with $A$-analytic maps. 
		As an application, we determine necessary and sufficient conditions for the data coming from two different sources to be mistaken for each other.
	\end{abstract}
	\section{Introduction}
	
	In this work, we consider an inverse source problem for stationary radiative transfer (transport)  \cite{cercignani,chandrasekhar}, in a two-dimensional bounded, strictly convex domain $\OM\subset \BR^2$, with boundary $\Gamma$. The stationary radiative transport models the linear transport of particles through a medium and includes absorption and scattering phenomena.  
	In the steady state case, when generated solely by a linearly anisotropic source $f(z, \btheta)=f_0(z) + \btheta \cdot \bF(z)$ inside $\OM$,  the density  $u(z,\btheta)$ of particles at $z$  traveling in the direction $\btheta$ 
	solves the stationary radiative transport boundary value problem 
	\begin{equation}\label{TransportScatEq1}
		\begin{aligned} 
			&\btheta\cdot\nabla u(z,\btheta) +a(z,\btheta) u(z,\btheta) - \int_{\sph} k(z,\btheta,\btheta')u(z,\btheta') d\btheta' = f(z,\btheta),
			\quad (z,\btheta)\in \OM\times\sph,\\
			&u\lvert_{\Gamma_-}=0,
		\end{aligned}
	\end{equation}
	where the function $a(z, \btheta)$ is the medium capability of absorption per unit path-length at $z$ moving in the direction $\btheta$ called the attenuation coefficient, function $k(z,\btheta,\btheta')$ is the scattering coefficient which accounts for particles from an arbitrary direction $\btheta'$ which scatter in the direction $\btheta$ at a point $z$, and $\Gam_{-} :=\{(\zeta,\btheta)\in \Gam \times\sph:\, \nu(\zeta)\cdot\btheta<0 \}$ is the incoming unit tangent sub-bundle of the boundary, with $\nu(\zeta)$ being the outer unit normal at $\zeta \in \Gam$. 
	The attenuation and scattering coefficients are assumed known real valued functions.
	The boundary condition in \eqref{TransportScatEq1} indicates that no radiation is coming from outside the domain. Throughout, the measure  $d \btheta$ on the unit sphere $\sph$ is normalized to $ \int_{\sph} d\btheta =1$. 

    Under various assumptions, e.g., \cite{dautrayLions4, choulliStefanov99,  anikonov02,mokhtar, balTamasan07},  the (forward) boundary value problem  \eqref{TransportScatEq1} is known to have a unique solution,  with a general result in  \cite{stefanovUhlmann08} showing that, for an open and dense set of coefficients $a\in C^2(\ol\Omega\times\sph)$ and $k\in C^2(\ol\Omega\times\sph\times\sph)$, 
	the boundary value problem  \eqref{TransportScatEq1} has a unique solution $u\in L^2(\Omega\times\sph)$ for any $f\in L^2(\Omega\times\sph)$. 
	In \cite{fujiwaraSadiqTamasan21b}, it is shown that for attenuation merely \emph{once} differentiable,  $a\in C^1(\ol\Omega\times\sph)$ and $k\in C^2(\ol\Omega\times\sph\times\sph)$, 
	the boundary value problem  \eqref{TransportScatEq1} has a unique solution $u\in L^p(\Omega\times\sph)$ for any $f\in L^p(\Omega\times\sph)$, $p >1$.
	Moreover,  uniqueness result of  the forward problem \eqref{TransportScatEq1} are also establish in weighted $L^p$ spaces in \cite{eggerSchlottbom}.
	In our reconstruction method here, some of our arguments require solutions $u\in C^{2,\mu}(\ol\OM\times\sph)$, $\frac{1}{2}<\mu<1$. We revisit the arguments in  \cite{stefanovUhlmann08,fujiwaraSadiqTamasan21b} and show that such a regularity can be achieved for sources $f\in W^{3,p}(\Omega\times\sph)$, $p>4$; see Theorem \ref{u_reg_Wp} (iii) below.
	
	For a given medium, i.e., $a$ and $k$ both known, we consider the inverse problem of determining the scalar field $f_0$, and the vector field $\bF$  from measurements $g_{\scaleto{f_0,\bF}{5.75pt}}$ of exiting radiation on $\Gamma$,
	\begin{align}\label{data}
		u|_{\Gamma_+} = g_{\scaleto{f_0,\bF}{4.75pt}},
	\end{align}
	where  $\Gamma_+:=\{(z,\btheta)\in \Gamma \times\sph:\,\nu(z)\cdot\btheta>0 \}$ 
	is the outgoing unit tangent sub-bundle of the boundary, with $\nu(\zeta)$ being the outer unit normal at $\zeta \in \Gam$. 
	
	For anisotropic sources the problem has non-uniqueness \cite{vladimirBook, tamasan07}.
	One of  our main result, Theorem \ref{Th:LuFs} shows that from boundary measurement data $ g_{\scaleto{f_0,\bF}{4.75pt}}$, 
	one can only recover the part of the linear anisotropic source $f = f_0 +\btheta \cdot \bF$; in particular, only the  solenoidal part $ \bF^s$ of the vector field $ \bF$ is recovered
	inside the domain. However, in Theorem \ref{main_divfree}, if one know apriori that the source $\bF$ is divergence-free, then from the data $ g_{\scaleto{f_0,\bF}{4.75pt}}$, one can recover both isotropic field $f_0$ and the vector field $\bF$ inside the domain. 
	Moreover, instead of apriori information of the divergence-free source  $\bF$, if one has the additional data $ g_{\scaleto{f_0,\bzero}{4.75pt}}$ information along with the data  $ g_{\scaleto{f_0,\bF}{4.75pt}}$, then in Theorem \ref{Th_2data}, one can recover both sources $f_0$ and $\bF$ under subcritical assumption of the medium. One of the main crux in our reconstruction method is the observation that any finite Fourier content in the angular variable of the scattering kernel splits the problem into an infinite system of non-scattering case and a boundary value problem for a finite elliptic system. The role of the finite Fourier content has been independently recognized in \cite{fujiwaraSadiqTamasan19} and  \cite{monardBalPreprint}.
	
	
	The inverse source problem above has applications in medical imaging: In a non-scattering ($k=0$) and non-attenuating ($a=0$) medium the problem is mathematically equivalent to the one occurring in classical computerized $X$-ray tomography (e.g., \cite{bukhgeimBook,nattererBook}).
		In the absorbing non-scattering medium,
	 such a problem (with only isotropic source $f=f_0$), appears in Positron/Single Photon Emission Tomography (PET/SPECT) \cite{nattererBook,nattererWubbeling}, and (with $f_0 = 0$ and $f=\btheta\cdot\bF$), appears in Doppler Tomography \cite{		nattererWubbeling, nattererBook,sparSLP95}.
	    For applications in scattering media the inverse source problem
	 	formulated here is the two dimensional version of the corresponding three dimensional problem occurring in imaging techniques such as Bioluminescence tomography and Optical Molecular Imaging, see  \cite{YiSanchezMccormick92,han,klose} and references therein.

	In Section \ref{sec:forward}, we remark on the existence and regularity of the forward boundary value problem. The results in Section  \ref{sec:forward} consider both attenuation coefficient and scattering kernel in general setting. 
	
	In this work, except for the results in Section \ref{sec:forward}, 
	the attenuation coefficient are assumed isotropic  $a=a(z)$, and that the scattering kernel $k(z,\btheta,\btheta')=k(z,\btheta\cdot\btheta')$ depends polynomially on the angle between the directions.  Moreover, the functions $a,k$ and the source $f$ are assumed real valued.

	In Section \ref{sec:prelim}, we recall some basic properties of $A$-analytic theory, and in Section \ref{sec:reconstruct} we provide the reconstruction method for the full (part) of the linearly anisotropic source. Our approach is based on the Cauchy problem for a Beltrami-like equation associated with $A$-analytic maps in the sense of Bukhgeim \cite{bukhgeimBook}.  The $A$-analytic approach developed in \cite{bukhgeimBook}  treats the  non-attenuating case, and  the absorbing but non-scattering case is treated in \cite{ABK}.  The original idea of Bukhgeim from the absorbing non-scattering media \cite{bukhgeimBook,ABK} to the absorbing and scattering media has been extended in \cite{fujiwaraSadiqTamasan19,fujiwaraSadiqTamasan21b}. In here we extend the results  in \cite{fujiwaraSadiqTamasan19,fujiwaraSadiqTamasan21b} to linear anisotropic sources.

In Section \ref{sources_mistaken}, the method used will explain when the data coming from two different linear anisotropic field sources can be mistaken for each other.
	\section{Some remarks on the existence and regularity of the forward problem}\label{sec:forward}
	
	In this section, we revisit the arguments in \cite{stefanovUhlmann08, fujiwaraSadiqTamasan21b},  and remark on the well posedness in $L^p(\OM\times\sph)$ of the boundary value problem \eqref{TransportScatEq1}.
	Adopting the notation in \cite{stefanovUhlmann08,fujiwaraSadiqTamasan21b}, we consider the operators
	\begin{align*} 
		[T_1^{-1} g ] (x, \btheta) &= \int_{-\INF}^{0} e^{- \int_{s}^{0} a(x +t \btheta, \btheta ) dt} g(x + s \btheta,\btheta) ds,
		\: \text{and} \; 
		[K g ] (x, \btheta) = \int_{\sph} k(x,\btheta,\btheta')g(x,\btheta') d\btheta',
	\end{align*}
	where the intervening functions are extended by 0 outside $\OM$.
	
	Using the above operators, the problem \eqref{TransportScatEq1} can be rewritten as 
	\begin{align}\label{transportEq_equiv}
		(I - T_1^{-1} K) u &= T_1^{-1} f, \qquad u\lvert_{\Gamma_-}=0.
	\end{align}
	If the operator $I- T_1^{-1}K$ is invertible, then the problem \eqref{transportEq_equiv} is uniquely solvable, and has  the form 
	$u = (I - T_1^{-1} K)^{-1} T_1^{-1} f $.
	By using the formal expansion
	\begin{equation}\label{u_decomp}
		\begin{aligned}
			u &= T_1^{-1} f+ T_1^{-1} K T_1^{-1} f +T_1^{-1} (KT_1^{-1}K)[I - T_1^{-1} K]^{-1} T_1^{-1} f,
		\end{aligned}
	\end{equation}
	the well posed-ness in $L^p(\Omega\times\sph)$ of the (forward)  boundary value problem \eqref{TransportScatEq1} is reduced to showing that the operator  $I- T_1^{-1}K$  is invertible in   $L^p(\OM\times \sph)$. 
	
	We recall some results in  \cite{fujiwaraSadiqTamasan21b}.
	\begin{prop}\cite[Proposition 2.1]{fujiwaraSadiqTamasan21b}\label{KToneK_prop}
		Let $a\in C^1(\ol\Omega\times\sph)$ and $k \in C^{2}(\ol\Omega\times\sph\times\sph)$. Then the operator
		\begin{align} \label{KToneK_reg}
			K T_1^{-1} K: L^p(\OM \times \sph) \longrightarrow W^{1,p}(\Omega\times\sph) \mbox{ is bounded},
			\; 1<p<\infty. 
		\end{align}
	\end{prop}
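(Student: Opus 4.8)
The plan is to compute the Schwartz kernel of $KT_1^{-1}K$ explicitly, passing from the ``transport variables'' $(s,\btheta')$ that appear in $T_1^{-1}$ to the Cartesian variable $y$, which exhibits $KT_1^{-1}K$ as a Riesz-potential-type operator on $\OM\subset\BR^2$ whose gradient is of Calder\'on--Zygmund type. First I would insert the definitions (all functions extended by $0$ off $\OM$),
\begin{equation*}
(KT_1^{-1}Kg)(x,\btheta) = \int_\sph\!\int_{-\INF}^0\!\int_\sph k(x,\btheta,\btheta')\, e^{-\int_s^0 a(x+t\btheta',\btheta')dt}\, k(x+s\btheta',\btheta',\btheta'')\, g(x+s\btheta',\btheta'')\, d\btheta''\, ds\, d\btheta',
\end{equation*}
and, for fixed $x$, substitute $y=x+s\btheta'$ ($s<0$, $\btheta'\in\sph$), so that $\btheta'=\om(x,y):=(x-y)/|x-y|$, $s=-|x-y|$, and $ds\,d\btheta'=|x-y|^{-1}dy$, obtaining
\begin{equation*}
(KT_1^{-1}Kg)(x,\btheta) = \int_\OM\!\int_\sph \frac{\beta\!\left(x,y,\om(x,y),\btheta,\btheta''\right)}{|x-y|}\, g(y,\btheta'')\, d\btheta''\, dy, \qquad \beta(x,y,w,\btheta,\btheta''):=k(x,\btheta,w)\,E(x,y)\,k(y,w,\btheta''),
\end{equation*}
where $E(x,y):=\exp\!\big(-\int_0^{|x-y|}a(x-t\,\om(x,y),\om(x,y))\,dt\big)$ is the attenuation along the segment $[x,y]$. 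Because $a\in C^1$ and $\OM$ is bounded, $E$ is bounded with $E(x,x)=1$; and $\beta$ is bounded, $C^1$ in the spatial slots $x,y$ and $C^2$ in the angular slot $w$, uniformly on $\overline\OM\times\overline\OM\times\sph\times\sph\times\sph$.

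Boundedness on $L^p$ is then immediate: $|\beta|\le C$ and $\sup_{x\in\OM}\int_\OM|x-y|^{-1}dy<\INF$, so Schur's test (with $\int_\sph d\btheta''=1$) gives $\norm{KT_1^{-1}Kg}_{L^p(\OM\times\sph)}\le C\norm{g}_{L^p(\OM\times\sph)}$ for all $1\le p\le\INF$; an angular derivative in $\btheta$ only hits the bounded factor $k(x,\btheta,w)$ and is controlled the same way. The real task is the gained $x$-derivative. Writing $\kappa(x,y):=\beta(x,y,\om(x,y),\btheta,\btheta'')/|x-y|$ and using $\partial_{x_j}\om(x,y)=|x-y|^{-1}\om^\p_j(x,y)\,\om^\p(x,y)$ (two dimensions) together with $\partial_{x_j}|x-y|^{-1}=-\om_j(x,y)/|x-y|^2$, I would show that, modulo a remainder bounded by $C/|x-y|$ (again $L^p$-bounded by Schur),
\begin{equation*}
\partial_{x_j}\kappa(x,y)\ \equiv\ \frac{-\,\beta\!\left(x,y,\om(x,y),\btheta,\btheta''\right)\om_j(x,y)\ +\ \left(\partial_\phi\beta\right)\!\left(x,y,\om(x,y),\btheta,\btheta''\right)\om^\p_j(x,y)}{|x-y|^2},
\end{equation*}
where $\partial_\phi$ denotes the derivative along $\sph$ in the $w$-slot.

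The hard part will be seeing that this $|x-y|^{-2}$ kernel is of Calder\'on--Zygmund type rather than merely non-integrable. Freezing the spatial slots at $y=x$ costs an $O(|x-y|)$ error (a further Schur-bounded remainder, since $\beta$ and $\partial_w\beta$ are $C^1$ in $y$ and $E(x,x)=1$), and the frozen numerator is $\Psi(x,w):=-K(\phi)\,w_j+K'(\phi)\,w^\p_j$ with $K(\phi):=k(x,\btheta,w(\phi))\,k(x,w(\phi),\btheta'')$, $w(\phi)=(\cos\phi,\sin\phi)$, $w^\p(\phi)=w'(\phi)$. Since $(w^\p)'=-w$,
\begin{equation*}
-K(\phi)\,w(\phi)_j+K'(\phi)\,w^\p(\phi)_j=\frac{d}{d\phi}\big(K(\phi)\,w^\p(\phi)_j\big),
\end{equation*}
so $\int_\sph\Psi(x,w)\,dw=0$ by periodicity, while $\Psi(x,\cdot)\in C^1(\sph)$ uniformly in $x,\btheta,\btheta''$ (this is where $k\in C^2$ enters). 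Hence the leading part of $\partial_{x_j}(KT_1^{-1}K)$ is, after integrating out $\btheta''$, an integral operator with kernel $\Psi(x,\om(x,y))/|x-y|^2$ of variable-coefficient Calder\'on--Zygmund type, bounded on $L^p(\BR^2)$ for $1<p<\INF$ with norm controlled by $\norm{k}_{C^2(\overline\OM\times\sph\times\sph)}$, hence on $L^p(\OM)$ by restriction. Adding the Schur-bounded remainders and using Minkowski's integral inequality to integrate $\btheta''$ over the compact $\sph$ gives $\norm{\partial_x(KT_1^{-1}Kg)}_{L^p(\OM\times\sph)}\le C\norm{g}_{L^p}$; together with the $L^p$- and $\btheta$-derivative bounds this is exactly $KT_1^{-1}K:L^p(\OM\times\sph)\to W^{1,p}(\OM\times\sph)$. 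The remaining ingredients (the change of variables, the Schur estimates, the bookkeeping that $a\in C^1$, $k\in C^2$ is precisely what makes every remainder weakly singular and the frozen profile $C^1$, and the passage from $\BR^2$ to $\OM$) are routine.
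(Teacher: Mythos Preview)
The paper does not supply its own proof of this proposition; it is quoted verbatim from \cite[Proposition 2.1]{fujiwaraSadiqTamasan21b}. Your sketch is correct and is essentially the standard argument used there (and, with $a\in C^2$, already in \cite{stefanovUhlmann08}): the polar change of variables $y=x+s\btheta'$ turns $KT_1^{-1}K$ into an integral operator with weakly singular kernel $\beta/|x-y|$, and the crucial step---that the $x$-gradient produces a principal-value kernel of order $|x-y|^{-2}$ whose angular profile has vanishing mean---is exactly your exact-derivative identity $-K(\phi)w_j+K'(\phi)w^\perp_j=\tfrac{d}{d\phi}(K(\phi)w^\perp_j)$. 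Your bookkeeping of which regularity is used where (the $C^1$ of $a$ for the Lipschitz remainder in $E$, the $C^2$ of $k$ for the $C^1$-smoothness of the angular profile $\Psi(x,\cdot)$ needed in the Calder\'on--Zygmund estimate) matches the hypotheses precisely. One minor omission: with the paper's normalization $\int_\sph d\btheta=1$, the Jacobian is $ds\,d\btheta'=(2\pi|x-y|)^{-1}dy$ rather than $|x-y|^{-1}dy$, but this harmless constant does not affect any estimate.
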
   
	
	The following simple result is useful.
	\begin{lemma}  \cite[Lemma 2.2]{fujiwaraSadiqTamasan21b}\label{lem:12} Let $X$ be a Banach space and $A:X\to X$ be bounded. Then $I \pm A$ have bounded inverses in $X$, if and only if $I-A^2$ has a bounded inverse in $X$.
	\end{lemma}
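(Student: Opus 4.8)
The plan is to build everything on the commuting factorization $I - A^2 = (I+A)(I-A) = (I-A)(I+A)$, which is valid precisely because $A$ commutes with $I \pm A$. Since $X$ is a Banach space, throughout ``invertible'' for a bounded operator will mean having a (two-sided) inverse that is automatically bounded by the open mapping theorem, but here we will in fact exhibit the inverses explicitly as compositions of bounded operators, so even this is not needed.

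For the ``only if'' direction, suppose $I+A$ and $I-A$ both have bounded inverses. Then $I-A^2$, being the composition of two boundedly invertible operators, is boundedly invertible, with $(I-A^2)^{-1} = (I-A)^{-1}(I+A)^{-1} = (I+A)^{-1}(I-A)^{-1}$. For the ``if'' direction, set $B := (I-A^2)^{-1}$, which is bounded by hypothesis, and observe that $B$ commutes with $A$, hence with $I\pm A$, since $I-A^2$ does. From $B(I+A)(I-A) = B(I-A^2) = I$ and $(I-A)(I+A)B = (I-A^2)B = I$ we read off that $B(I+A)$ is a bounded left inverse of $I-A$ and $(I+A)B$ is a bounded right inverse of $I-A$. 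I would then invoke the elementary fact that an operator $C$ admitting a bounded left inverse $L$ and a bounded right inverse $R$ is boundedly invertible with $L=R$: indeed $L = L(CR) = (LC)R = R$, whence $C^{-1} := L = R$ is a bounded two-sided inverse. Applying this with $C = I-A$ shows $I-A$ is boundedly invertible; interchanging the roles of $I+A$ and $I-A$ in the identical computation (again using that both orderings of the factorization of $I-A^2$ are available by commutativity) shows $I+A$ is boundedly invertible.

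The argument is purely algebraic, so there is no genuine analytic obstacle. The only point deserving a line of care is the identification of the left and right one-sided inverses as a single two-sided inverse, which is exactly where the commutativity of $A$ with its factors and the associativity of composition enter; everything else is bookkeeping with the factorization of $I - A^2$.
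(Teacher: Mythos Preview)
Your argument is correct. The paper itself does not supply a proof of this lemma but simply cites it from \cite{fujiwaraSadiqTamasan21b}; your factorization-based argument (using $I-A^2=(I+A)(I-A)=(I-A)(I+A)$ together with the elementary left-inverse/right-inverse coincidence) is the standard one and is complete as written.
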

	
	For $\lambda\in\BC$, we note that $\ds (T_1^{-1} (\lambda K))^2 = \lambda^2 T_1^{-1} (KT_1^{-1}K)$. By Proposition \ref{KToneK_prop}, the operator $\ds (T_1^{-1} (\lambda K))^2$  is compact for any $\lambda \in\BC$.  By Lemma \ref{lem:12}, if the operator $I- (T_1^{-1}(\lambda K))^2$ is invertible in  $L^p(\OM\times \sph)$, then the operator $I- T_1^{-1}(\lambda K)$ is invertible in  $L^p(\OM\times \sph)$. Since $\ds I- (T_1^{-1} (\lambda K))^2$ is invertible for $\lambda$ in a neighborhood of $0$, an application of the analytic Fredholm alternative in Banach spaces, e.g., \cite[Theorem VII.4.5]{dunfordScwartz}, yields the following result.
	\begin{theorem}   \cite[Theorem 2.1]{fujiwaraSadiqTamasan21b}\label{analytic_Fredholm} 
		Let $p>1$, $a\in C^{1}(\ol\Omega\times\sph)$, and $k\in C^{2}(\ol\Omega\times\sph\times\sph)$. At least one of the following statements is true.
		
		(i) $\ds I - T_1^{-1} K$ is invertible in $ L^p(\OM\times \sph)$.
		
		(ii) there exists $\epsilon>0$ such that $\ds I - T_1^{-1} (\lambda K)$ is invertible in $ L^p(\OM\times \sph)$, for any 
		$0< |\lambda-1|<\epsilon$.
	\end{theorem}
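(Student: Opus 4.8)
The plan is to invoke the analytic Fredholm alternative for the operator pencil $\lambda\mapsto (T_1^{-1}(\lambda K))^2$ on $L^p(\OM\times\sph)$, feeding in the two ingredients already assembled just above: the regularity gain of Proposition \ref{KToneK_prop} and the squaring reduction of Lemma \ref{lem:12}.

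First I would record the two structural facts. The map $\lambda\mapsto (T_1^{-1}(\lambda K))^2=\lambda^2\,T_1^{-1}(KT_1^{-1}K)$ is a (quadratic, hence entire) analytic family of bounded operators on $L^p(\OM\times\sph)$, and each member is compact: by Proposition \ref{KToneK_prop} the operator $KT_1^{-1}K$ is bounded from $L^p(\OM\times\sph)$ into $W^{1,p}(\OM\times\sph)$, the embedding $W^{1,p}(\OM\times\sph)\hookrightarrow L^p(\OM\times\sph)$ is compact by Rellich--Kondrachov on the bounded set $\OM\times\sph$, and $T_1^{-1}$ is bounded on $L^p(\OM\times\sph)$; composing these, $T_1^{-1}(KT_1^{-1}K)$ is compact on $L^p(\OM\times\sph)$, and so is its scalar multiple $\lambda^2\,T_1^{-1}(KT_1^{-1}K)$.

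Next, since at $\lambda=0$ the operator $I-(T_1^{-1}(\lambda K))^2$ equals $I$, it is boundedly invertible there. Applying \cite[Theorem VII.4.5]{dunfordScwartz} on the connected domain $\BC$, the dichotomy ``either never invertible, or invertible off a locally finite set'' forces the second branch: there is a set $S\subset\BC$ with no accumulation point in $\BC$ such that $I-(T_1^{-1}(\lambda K))^2$ is boundedly invertible for all $\lambda\in\BC\setminus S$. I would then split according to whether $1\in S$. If $1\notin S$, then $I-(T_1^{-1}K)^2$ is boundedly invertible, and Lemma \ref{lem:12} upgrades this to bounded invertibility of $I-T_1^{-1}K$, which is alternative (i). If $1\in S$, discreteness of $S$ yields $\epsilon>0$ with $\{\lambda:0<|\lambda-1|<\epsilon\}\cap S=\emptyset$, so for each such $\lambda$ the operator $I-(T_1^{-1}(\lambda K))^2$ is boundedly invertible, and Lemma \ref{lem:12} again gives bounded invertibility of $I-T_1^{-1}(\lambda K)$, which is alternative (ii).

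There is essentially no hard obstacle here, since the compactness input and the squaring trick are already isolated; the only point needing care is a bookkeeping one, namely that the analytic Fredholm alternative must be applied on a connected open set (here all of $\BC$) so that invertibility at $\lambda=0$ genuinely propagates to the complement of a set with no accumulation point, and that this set is then automatically discrete near the point $\lambda=1$ so that the punctured-neighborhood statement in (ii) makes sense.
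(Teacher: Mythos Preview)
Your proposal is correct and follows exactly the argument the paper sketches in the paragraph preceding the theorem: identify $(T_1^{-1}(\lambda K))^2=\lambda^2 T_1^{-1}(KT_1^{-1}K)$, use Proposition~\ref{KToneK_prop} (together with Rellich--Kondrachov) for compactness, note invertibility at $\lambda=0$, apply the analytic Fredholm alternative \cite[Theorem VII.4.5]{dunfordScwartz}, and transfer via Lemma~\ref{lem:12}. Your write-up merely makes explicit the case split at $\lambda=1$ that the paper leaves implicit.
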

	
	The regularity of the solution $u$ of \eqref{TransportScatEq1} increases with the regularity of $f$ as follows.
	\begin{theorem} \label{u_reg_Wp}
		Consider the boundary value problem \eqref{TransportScatEq1} with $a\in C^3(\ol\OM\times\sph)$. For $p>1$, let $k\in C^{3}(\ol\Omega\times\sph\times\sph)$ be such that $\ds I - T_1^{-1}K$ is invertible in $L^p(\OM\times\sph)$, and let $u\in L^p(\OM\times\sph)$ in \eqref{u_decomp} be the solution  of \eqref{TransportScatEq1}.
		\begin{enumerate}
			\item[(i)] If  $f \in W^{1,p}(\OM \times \sph)$, then $u \in W^{1,p}(\OM \times \sph)$. 
			\item[(ii)]  If  $f \in W^{2,p}(\OM \times \sph)$, then $u \in W^{2,p}(\OM \times \sph)$. 
			\item[(iii)]  If  $f \in W^{3,p}(\OM \times \sph)$, then $u \in W^{3,p}(\OM \times \sph)$.
		\end{enumerate}
		
	\end{theorem}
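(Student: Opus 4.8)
The plan is to extract the regularity of $u$ directly from the decomposition \eqref{u_decomp}, which, since $[I-T_1^{-1}K]^{-1}T_1^{-1}f=u$, can be rewritten as
\[
u \;=\; T_1^{-1} f \;+\; T_1^{-1} K\, T_1^{-1} f \;+\; T_1^{-1}\bigl(K T_1^{-1} K\bigr)u ,
\]
and to run an induction on the order of differentiability, the base case being (i). Two ingredients are needed. First, $T_1^{-1}$ is bounded on $W^{m,p}(\OM\times\sph)$ when $a\in C^{m}(\ol\OM\times\sph)$, and $K$ is bounded on $W^{m,p}(\OM\times\sph)$ when $k\in C^{m}$, for $0\le m\le 3$. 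Second, the mapping property of Proposition \ref{KToneK_prop} improves, for $a,k\in C^{m+1}$, to
\[
K T_1^{-1} K :\ W^{m,p}(\OM\times\sph)\longrightarrow W^{m+1,p}(\OM\times\sph)\quad\text{bounded},\qquad m=0,1,2 .
\]

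The second ingredient is obtained by revisiting the proof of Proposition \ref{KToneK_prop}: after the change of variables turning $K T_1^{-1} K$ into a spatial integral operator with a $|x-y|^{-1}$ kernel (times a smooth factor built from $a$ and $k$), one differentiates under the integral, using the extra regularity of $a,k$ to trade one power of $|x-y|^{-1}$ for one derivative; the leading term is a Calder\'on--Zygmund operator, bounded on $L^p$ for $1<p<\infty$, and the lower-order terms are weakly singular. For the first ingredient one differentiates the Duhamel formula for $T_1^{-1}$: the characteristic derivative is free, since $\btheta\cdot\nabla(T_1^{-1}g)=g-a\,T_1^{-1}g$, and each transverse spatial or angular derivative produces terms of the same type plus a boundary contribution carrying a derivative of the backward travel time $\tau_-(x,\btheta)$ and a trace of $g$ at the backward exit point; strict convexity of $\OM$ keeps $\tau_-$ smooth off the glancing directions, with only a square-root degeneracy there, and one checks the resulting operators are bounded on $L^p$. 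Boundedness of $K$ on $W^{m,p}$ is elementary, differentiation falling on the smooth kernel $k$.

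Granting these, (i) is read off from the displayed identity: $T_1^{-1}f\in W^{1,p}$ because $f\in W^{1,p}$; $T_1^{-1}KT_1^{-1}f\in W^{1,p}$ because $T_1^{-1}f\in W^{1,p}$ and $K,T_1^{-1}$ both preserve $W^{1,p}$ ($k\in C^3$); and $T_1^{-1}(KT_1^{-1}K)u\in W^{1,p}$ because $u\in L^p$, $KT_1^{-1}Ku\in W^{1,p}$ by Proposition \ref{KToneK_prop}, and $T_1^{-1}$ preserves $W^{1,p}$. Hence $u\in W^{1,p}$. For (ii) I bootstrap: by (i) $u\in W^{1,p}$, so the first two terms lie in $W^{2,p}$ as before (now $f\in W^{2,p}$, $a,k\in C^{3}$), while $KT_1^{-1}Ku\in W^{2,p}$ by the improved mapping property with $m=1$, whence $T_1^{-1}(KT_1^{-1}Ku)\in W^{2,p}$; thus $u\in W^{2,p}$. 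Part (iii) is the same step once more: $u\in W^{2,p}$ gives $KT_1^{-1}Ku\in W^{3,p}$ (improved property, $m=2$), the first two terms are in $W^{3,p}$ since $f\in W^{3,p}$ and $a,k\in C^{3}$, so $u\in W^{3,p}$.

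The delicate point is the first ingredient, the boundedness of $T_1^{-1}$ on $W^{m,p}$: the transverse derivatives of $\tau_\pm$ are only of order $(\mathrm{dist\ to\ glancing})^{-1/2}$, so the boundary (trace) contributions to $\nabla(T_1^{-1}g)$ must be estimated carefully, exploiting strict convexity of $\OM$ and the $C^{3}$ smoothness of $a$. A variant that handles the inductive steps (ii)--(iii) more directly is to differentiate the transport equation \eqref{TransportScatEq1} itself: a first-order derivative $\partial u$ solves $\btheta\cdot\nabla(\partial u)+a\,\partial u-K(\partial u)=F$, where $F$ is assembled from $\partial f$, from derivatives of $u$ of the already-controlled order, and from the commutators $[\partial_{\btheta},\btheta\cdot\nabla]=\btheta^{\perp}\cdot\nabla$ and $[\partial,K]$; subtracting the lift of the incoming trace $\partial u|_{\gm}$ along the characteristics --- which, because the backward exit point is constant along characteristics, solves a transport equation with $L^p$ right-hand side and zero incoming data --- invertibility of $I-T_1^{-1}K$ puts $\partial u\in L^p$, closing the induction.
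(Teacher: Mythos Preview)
Your argument for part (i) is essentially the paper's: read regularity off the three-term decomposition, using that $T_1^{-1}$ and $K$ preserve $W^{1,p}$ and that $KT_1^{-1}K$ gains one derivative by Proposition~\ref{KToneK_prop}.

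For parts (ii) and (iii), your main route diverges from the paper's and rests on two unproven upgrades: that $T_1^{-1}$ is bounded on $W^{m,p}$ for $m=2,3$, and that $KT_1^{-1}K:W^{m,p}\to W^{m+1,p}$ for $m=1,2$. You flag the first as ``the delicate point'' and sketch why it might hold, but neither is established here; the first is genuinely subtle because successive derivatives of $\tau_-$ blow up like $d^{-1/2}, d^{-3/2},\dots$ at glancing, so the boundary contributions to $\nabla^m(T_1^{-1}g)$ are not obviously in $L^p$, and the second requires redoing the Calder\'on--Zygmund analysis behind Proposition~\ref{KToneK_prop} at each order. The paper sidesteps both issues entirely. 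It rewrites \eqref{TransportScatEq1} as the integral equation $u+T_0^{-1}(au)-T_0^{-1}Ku=T_0^{-1}f$ and \emph{differentiates this identity}: each derivative $u_{x_j}$, $u_{\theta_j}$, $u_{x_ix_j}$, $u_{\theta_i\theta_j}$, $u_{x_i\theta_j}$ then satisfies an integral equation of the same structural type (the full one, or the one with $K=0$), whose right-hand side is built from lower-order derivatives of $u$ already controlled by the previous step and from derivatives of $a,k,f$. Part (i) is then applied to each of these derivative equations. The point is that the right-hand sides involve only $T_0^{-1}$, $\widetilde{T}_{0,j}^{-1}$, and the differentiated kernels $K_{\xi_j}$, $K_{\eta_i\xi_j}$ acting on $W^{1,p}$ inputs, so the paper never needs any operator to preserve more than $W^{1,p}$. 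Your ``variant'' at the end --- differentiate the equation and feed the derivative back through invertibility of $I-T_1^{-1}K$ --- is exactly the paper's strategy in spirit; the paper just carries it out at the level of the integral equation rather than the PDE, which makes the $\theta$-derivative bookkeeping (the appearance of $\widetilde{T}_{0,j}^{-1}$ from $\partial_{\theta_j}$ hitting the $s$-integral) explicit and avoids having to lift and subtract incoming traces of $\partial u$.
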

	\begin{proof}
		
		(i) We consider the regularity of the solution $u$ of \eqref{TransportScatEq1} term by term as in \eqref{u_decomp}: 
		\begin{align*}
			u 
			&=T_1^{-1} f+ T_1^{-1} K T_1^{-1} f +T_1^{-1} [KT_1^{-1}K ](I - T_1^{-1} K)^{-1} T_1^{-1} f.
		\end{align*}
		It is easy to see that the operator  $T_1^{-1}$  preserve the space $W^{1,p}(\Omega\times\sph)$, 
		and also the operator $K$  preserve the space $W^{1,p}(\Omega\times\sph)$,
		so that the first two terms, $T_1^{-1} f$ and $T_1^{-1} K T_1^{-1} f$, both  belong to $W^{1,p}(\Omega\times\sph)$. Moreover, $(I - T_1^{-1} K)^{-1 } T_1^{-1}f \in  L^p(\Omega\times\sph) $,
		and now, by using Proposition \ref{KToneK_prop}, the last term is also belong in $W^{1,p}(\OM\times\sph)$. 
		
		(ii)  We define the following operators
		\begin{equation}\label{T0tilde_Kj_defn}
			\begin{aligned}
				&T_0^{-1}u(x,\btheta):=\int_{-\infty}^0 u(x+t\btheta,\btheta)dt,  &K_{\xi_j}u(x,\btheta):=\int_\sph \frac{\del k}{\del \xi_j}(x,\btheta,\btheta')u(x,\btheta')d\btheta',  \\
				&\widetilde{T}_{0,j}^{-1}u(x,\btheta):=\int_{-\infty}^0 u(x+t\btheta,\btheta)t^jdt, &
				K_{\eta_i \xi_j}u(x,\btheta):	=\int_\sph \frac{\del^2 k}{\del \eta_i \del \xi_j }(x,\btheta,\btheta')u(x,\btheta')d\btheta', 
			\end{aligned}
		\end{equation} where $\eta_i = \{x_i, \theta_i\} $ and $\xi_j = \{x_j, \theta_j\}$ for $i , j =1,2$.\\
		It is easy to see that $T_0^{-1}, \widetilde{T}_{0,j}^{-1},  K_{\xi_j}$ and $K_{\eta_i \xi_j}$ preserve $W^{1,p}(\OM\times\sph)$.

		By evaluating the radiative transport equation in \eqref{TransportScatEq1} at $x + t \btheta$ and integrating in $t$ from $-\INF$ to $0$, the boundary value problem \eqref{TransportScatEq1} with zero incoming fluxes is equivalent to the integral equation:
		\begin{align}\label{TransportScatEq1_Equiv}
			u+ T_0^{-1} (au)- T_0^{-1}Ku=T_0^{-1}f.
		\end{align}

		According to part (i), for $f\in W^{1,p}(\OM\times\sph)$, the solution  $u\in W^{1,p}(\OM\times\sph)$, 
		and so $u_{x_j}\in L^p(\OM\times\sph)$. In particular $u_{x_j}$ solves the integral equation:
		\begin{align}\label{TransportScatEq1_Equiv_delxU}
			u_{x_j} + T_0^{-1} (au_{x_j})- T_0^{-1}Ku_{x_j}= - T_0^{-1} (a_{x_j}u)+ T_0^{-1}K_{x_j}u + T_0^{-1}f_{x_j}.
		\end{align}Moreover, since $a\in C^2(\ol\OM\times\sph)$, $k\in C^{2}(\ol\Omega\times\sph\times\sph)$, and $f \in W^{2,p}(\OM \times \sph)$, the right-hand-side of  \eqref{TransportScatEq1_Equiv_delxU} lies in $W^{1,p}(\OM \times \sph)$. By applying part (i) above, we get that the unique solution to \eqref{TransportScatEq1_Equiv_delxU}
		\begin{align}\label{xfirst}
			u_{x_j}  \in W^{1,p}(\OM \times \sph), \; j=1,2.
		\end{align}

		For $f\in W^{1,p}(\OM\times\sph)$, also according to part (i),  $u_{\theta_j}\in L^p(\OM\times\sph)$. In particular $u_{\theta_j}$ is the unique solution of the integral equation
		\begin{align}\label{TransportScatEq1_Equiv_DthetaU}
			u_{\theta_j}+ T_0^{-1}(a u_{\theta_j}) =  -\widetilde{T}_{0,1}^{-1} (au_{x_j})- T_0^{-1}(a_{\theta_j}u) -\widetilde{T}_{0,1}^{-1} (a_{x_j}u)  +T_0^{-1}K_{\theta_j} u +\widetilde{T}_{0,1}^{-1}K_{x_j}u + T_0^{-1} f_{\theta_j},
		\end{align} which is of the type \eqref{TransportScatEq1_Equiv} with $K=0$. Moreover, since $f \in W^{2,p}(\OM \times \sph)$, and, according to  \eqref{xfirst}, $u_{x_j}  \in W^{1,p}(\OM \times \sph), \; j=1,2$, the right-hand-side of  \eqref{TransportScatEq1_Equiv_DthetaU} lies in $W^{1,p}(\OM \times \sph)$. Again, by applying part (i), we get 
		$$u_{\theta_j} \in W^{1,p}(\OM \times \sph), \; j=1,2.$$ Thus, $u \in W^{2,p}(\OM \times \sph)$.
		
		(iii) 
		For $f\in W^{2,p}(\OM\times\sph)$, according to part (ii), $u_{x_j}, u_{\theta_j} \in W^{1,p}(\OM\times\sph)$, and  $u_{x_i x_j}\in L^p(\OM\times\sph)$. In particular $u_{x_i x_j}$ is the unique solution of the integral equation
		\begin{equation}\label{TransportScatEq1_Equiv_delxxU}
			\begin{aligned}
				u_{x_ix_j}+ T_0^{-1}(au_{x_ix_j})-T_0^{-1}(Ku_{x_ix_j}) &= T_0^{-1}f_{x_ix_j}-T_0^{-1}(a_{x_j}u_{x_i})-T_0^{-1}(a_{x_ix_j}u)+T_0^{-1}(K_{x_j}u_{x_i})\\
				&\quad + T_0^{-1}(K_{x_ix_j}u)-T_0^{-1}(a_{x_i}u_{x_j})-T_0^{-1}(K_{x_i}u_{x_j}).
			\end{aligned}
		\end{equation}
		Moreover, since $a\in C^3(\ol\OM\times\sph)$, $k\in C^{3}(\ol\Omega\times\sph\times\sph)$, and $f \in W^{3,p}(\OM \times \sph)$, the right-hand-side of  \eqref{TransportScatEq1_Equiv_delxxU} lies in $W^{1,p}(\OM \times \sph)$. By applying part (i) above, we get that the unique solution to \eqref{TransportScatEq1_Equiv_delxxU}
		\begin{align}\label{u_xx}
			u_{x_i x_j}  \in W^{1,p}(\OM \times \sph), \; i,j=1,2.
		\end{align}

		For $f\in W^{2,p}(\OM\times\sph)$, also according to part (ii), $u_{x_j}, u_{\theta_j} \in W^{1,p}(\OM\times\sph)$, and  $u_{\theta_i \theta_j}\in L^p(\OM\times\sph)$. In particular $u_{\theta_i \theta_j}$ is the unique solution of the integral equation
		\begin{align*}
			u_{{\theta}_i{\theta}_j}+T_0^{-1}(au_{{\theta}_i{\theta}_j}) &= T_0^{-1}(f_{{\theta}_i{\theta}_j}) - \widetilde{T}_{0,2}^{-1}(a_{x_i}u_{x_j})-\widetilde{T}_{0,1}^{-1}(a_{x_i} u_{\theta_j} )-\widetilde{T}_{0,1}^{-1}( a_{\theta_i} u_{x_j}) \\
			&
			\quad -T_0^{-1}( a_{\theta_i} u_{\theta_j} ) -\widetilde{T}_{0,2}^{-1}(a_{x_j}u_{x_i})- \widetilde{T}_{0,1}^{-1}(a_{x_j}  u_{\theta_i})-\widetilde{T}_{0,2}^{-1}(a_{x_ix_j}u)\\
			&\quad - \widetilde{T}_{0,1}^{-1}( a_{x_j \theta_i} u) -\widetilde{T}_{0,1}^{-1}(  a_{\theta_j} u_{x_i})- T_0^{-1}( a_{\theta_j} u_{\theta_i} )  
			-\widetilde{T}_{0,1}^{-1}( a_{\theta_i \theta_j} u)\\ \numberthis \label{TransportScatEq1_Equiv_DthetathetaU}
			&\quad - \widetilde{T}_{0,1}^{-1}(K_{{\theta}_j}u_{x_i}) -T_0^{-1}(K_{{\theta}_i{\theta}_j}u) - \widetilde{T}_{0,2}^{-1}(Ku_{x_ix_j})- \widetilde{T}_{0,1}^{-1}(K_{{\theta}_i}u_{x_j}),
		\end{align*}
		which is of the type \eqref{TransportScatEq1_Equiv} with $K=0$. 
		
		Moreover, since $f \in W^{3,p}(\OM \times \sph)$, and, according to  \eqref{u_xx}, $u_{x_i x_j}  \in W^{1,p}(\OM \times \sph), \; j=1,2$, the right-hand-side of  \eqref{TransportScatEq1_Equiv_DthetathetaU} lies in $W^{1,p}(\OM \times \sph)$. Again, by applying part (i), we get 
		\begin{align}\label{u_thetatheta}
			u_{\theta_i \theta_j} \in W^{1,p}(\OM \times \sph), \; i,j=1,2.
		\end{align}
		
		For $f\in W^{2,p}(\OM\times\sph)$, also according to part (ii), $u_{x_i} u_{\theta_j} \in  L^p(\OM\times\sph)$. In particular $u_{x_i \theta_j}$ is the unique solution of the integral equation
		
		\begin{align*}
			u_{x_i{\theta}_j}+T_0^{-1}(au_{x_i{\theta}_j})- T_0^{-1}(Ku_{x_j{\theta}_i}) &= T_0^{-1} (f_{x_j{\theta}_i})-\widetilde{T}_{0,1}^{-1}(a_{x_i}u_{x_j})- T_0^{-1}(a_{{\theta}_i} u_{x_j})
			\\
			&\quad 
			-T_0^{-1}(a_{x_j{\theta}_i}u) 
			- \widetilde{T}_{0,1}^{-1}(a_{x_j}u_{x_i})-T_0^{-1}(u_{{\theta}_i}a_{x_j})
			\\ \numberthis \label{TransportScatEq1_Equiv_DxthetaU}
			&\quad + \widetilde{T}_{0,1}^{-1}(K_{x_j}u_{x_i}) 
			+ T_0^{-1}(K_{{\theta}_i}u_{x_j}) 
			+T_0^{-1}(K_{x_j{\theta}_i}u),
		\end{align*}
		which is of the type \eqref{TransportScatEq1_Equiv}. 
		Moreover, since $a\in C^3(\ol\OM\times\sph)$, $k\in C^{3}(\ol\Omega\times\sph\times\sph)$, and $f \in W^{3,p}(\OM \times \sph)$, the right-hand-side of  \eqref{TransportScatEq1_Equiv_DxthetaU} lies in $W^{1,p}(\OM \times \sph)$.
		Again, by applying part (i), we get 
		\begin{align}\label{u_xtheta}
			u_{x_i \theta_j} \in W^{1,p}(\OM \times \sph), \; i,j=1,2.
		\end{align}
		
		From \eqref{u_xx}, \eqref{u_thetatheta}, and \eqref{u_xtheta}, we get $u\in W^{3,p}(\OM \times \sph)$.
		
	\end{proof}
	
	We remark that for Theorem \ref{u_reg_Wp} part (i) we only need $a\in C^1(\ol\OM\times\sph)$ and $k\in C^{2}(\ol\Omega\times\sph\times\sph)$, and  we only require $a\in C^2(\ol\OM\times\sph)$ and $k\in C^{2}(\ol\Omega\times\sph\times\sph)$  for Theorem \ref{u_reg_Wp} part (ii).  We also refer to
	\cite[Theorem 2.2]{fujiwaraSadiqTamasan21b} for part (i) and (ii) of Theorem \ref{u_reg_Wp}.
	Moreover, in a similar fashion, one can show that under sufficiently increased regularity of $a$ and $k$, the solution $u$ of \eqref{TransportScatEq1} belong to $u\in W^{m,p}(\OM \times \sph)$ for $\BZ \ni m \geq1$, provided $f \in W^{m,p}(\OM \times \sph)$.
	\vspace{-0.42cm}
	
	\section{Ingredients from  $A$-analytic theory}\label{sec:prelim}
	In this section we briefly introduce the properties of $A$-analytic maps needed later, and introduce notation. We recall some of the existing results and concepts used in our reconstruction method.

	
	For $0<\mu<1$, $p=1,2$, we consider the  Banach spaces:
	\begin{equation}\label{spaces}
		\begin{aligned} 
			l^{1,p}_{\INF}(\Gam) &:= \left \{ \bg= \langle g_{0}, g_{-1}, g_{-2},...\rangle\; : \lnorm{\bg}_{l^{1,p}_{\INF}(\Gam)}:= \sup_{\xi \in \Gam}\sum_{j=0}^{\INF}  \jpj^p \lvert g_{-j}(\xi) \rvert < \INF \right \},\\
			C^{\mu}(\Gam; l_1) &:= \left \{ \bg= \langle g_{0}, g_{-1}, g_{-2},...\rangle:
			\sup_{\xi\in \Gam} \lVert \bg(\xi)\rVert_{\ds l_{1}} + \underset{{\substack{
						\xi,\eta \in \Gam \\
						\xi\neq \eta } }}{\sup}
			\frac{\lVert \bg(\xi) - \bg(\eta)\rVert_{\ds l_{1}}}{|\xi - \eta|^{ \mu}} < \INF \right \}, \\
			Y_{\mu}(\Gam) &:= \left \{ \bg: \bg \in  l^{1,2}_{\INF}(\Gam) \; \text{and} \;
			\underset{{\substack{
						\xi,\eta \in \Gam \\
						\xi\neq \eta } }}{\sup} \sum_{j=0}^{\INF}  \jpj 
			\frac{\lvert g_{-j}(\xi) - g_{-j}(\eta)\rvert }{|\xi - \eta|^{ \mu}} < \INF \right \},
		\end{aligned}
	\end{equation} where, for brevity, we use the notation $\jpj=(1+|j|^2)^{1/2}$.
	Similarly,  we consider $ C^{\mu}(\ol \OM; l_1) $, and $ C^{\mu}(\ol \OM; l_\INF) $.
	
	For $z=x_1+\i x_2$, we consider the Cauchy-Riemann operators
	\begin{align}\label{dbar_op} 
		\ol{\del} = \left( \del_{x_{1}}+\i \del_{x_{2}} \right) /2 ,\quad \del = \left( \del_{x_{1}}- \i \del_{x_{2}} \right) /2.
	\end{align}	
	
	
	A sequence valued map $\OM \ni z\mapsto  \bv(z): = \langle v_{0}(z), v_{-1}(z),v_{-2}(z),... \rangle$ in $C(\ol\OM;l_\INF)\cap C^1(\OM;l_\INF)$
	is called {\em $L^2$-analytic} (in the sense of Bukhgeim), if
	\begin{equation}\label{Aanalytic}
		\ol{\del} \bv (z) + L^2 \del \bv (z) = 0,\quad z\in\OM,
	\end{equation}
	where $L$ is the left shift operator $\ds L \langle v_{0}, v_{-1}, v_{-2}, \cdots  \rangle =  \langle v_{-1}, v_{-2},  \cdots \rangle, $ and
	$L^{2}=L\circ L$. 
	
	Bukhgeim's original  theory \cite{bukhgeimBook}  shows that solutions of \eqref{Aanalytic},  satisfy a Cauchy-like integral formula,
	\begin{align}\label{Analytic}
		\bv (z) = \B [\bv \lvert_{\Gam}](z), \quad  z\in\OM,
	\end{align} where $\B$ is 
	the Bukhgeim-Cauchy operator  acting on $\bv \lvert_{\Gam}$. We use the formula in \cite{finch}, where
	$\B$ is defined component-wise for $n\geq 0$ by
	\begin{equation} \label{BukhgeimCauchyFormula}
		\begin{aligned} 
			(\B \bv)_{-n}(z) &:= \frac{1}{2\pi \i} \int_{\Gam}
			\frac{ v_{-n}(\zeta)}{\zeta-z}d\zeta  
			+ \frac{1}{2\pi \i}\int_{\Gam} \left \{ \frac{d\zeta}{\zeta-z}-\frac{d \ol{\zeta}}{\ol{\zeta}-\ol{z}} \right \} \sum_{j=1}^{\infty}  
			v_{-n-2j}(\zeta)
			\left( \frac{\ol{\zeta}-\ol{z}}{\zeta-z} \right) ^{j},\; z\in\OM.
		\end{aligned}
	\end{equation}
	
	The theorems below comprise some results in  \cite{sadiqTamasan01,sadiqTamasan02}. For the proof of the theorem below we refer to 
	\cite[Proposition 2.3]{sadiqTamasan02}.
	\begin{theorem}\label{BukhgeimCauchyThm}
		Let $0<\mu<1$, and let $\B$ be the Bukhgeim-Cauchy operator in \eqref{BukhgeimCauchyFormula}. 
		
		If $\bg = \langle g_{0}, g_{-1}, g_{-2},...\rangle\in Y_{\mu}(\Gam)$ for $\mu>1/2$, then $ \bv :=\B \bg\in C^{1,\mu}(\OM;l_1)\cap C^{\mu}(\ol \OM;l_1)\cap C^2(\OM;l_\infty)$ is $L^2$-analytic in $\OM$.
	\end{theorem}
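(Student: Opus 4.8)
The plan is to prove the three regularity statements and the differential equation componentwise, writing $(\B\bg)_{-n}=A_{-n}+C_{-n}$, where $A_{-n}(z)=\frac{1}{2\pi\i}\int_{\Gam}\frac{g_{-n}(\zeta)}{\zeta-z}\,d\zeta$ is the classical Cauchy transform of $g_{-n}$ and $C_{-n}$ is the Bukhgeim correction built from the tail $\langle g_{-n-2},g_{-n-4},\dots\rangle$, and then summing over $n\ge0$. Two elementary facts drive the argument: for $z\in\OM$ and $\zeta\in\Gam$ one has $|\ol\zeta-\ol z|=|\zeta-z|$, so the weight $\big(\tfrac{\ol\zeta-\ol z}{\zeta-z}\big)^{j}$ has modulus $1$; and $\tfrac{d\zeta}{\zeta-z}-\tfrac{d\ol\zeta}{\ol\zeta-\ol z}=2\i\,d\arg(\zeta-z)$, which, $\OM$ being strictly convex (hence star-shaped about each interior point), is a positive measure on $\Gam$ of total mass $2\pi$ when $z\in\OM$. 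Hence the series defining $C_{-n}$ converges absolutely as soon as $\sup_{\zeta}\sum_{j}|g_{-n-2j}(\zeta)|<\INF$, which holds since $\bg\in l^{1,2}_{\INF}(\Gam)$.

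For $\bv\in C^{\mu}(\ol\OM;l_1)$ I would invoke the Plemelj--Privalov theorem: the Cauchy transform of a $C^{\mu}(\Gam)$ density extends to $C^{\mu}(\ol\OM)$, and the same holds for Cauchy-type integrals against the measure $d\arg(\zeta-z)$ with a $C^{\mu}$ density. Applied termwise to $C_{-n}$, the $C^{\mu}(\ol\OM)$ seminorm of the $j$-th term is bounded by the $C^{\mu}(\Gam)$ norm of $g_{-n-2j}$ times a factor growing at most linearly in $j$ (the $j$ appearing when one estimates the Hölder difference of $\big(\tfrac{\ol\zeta-\ol z}{\zeta-z}\big)^{j}$ at nearby points). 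Summing in $j$ against the weight $\jpj$ and then in $n\ge0$ --- the $n$-sum merely reassembles the finitely many entries of $\bg$ --- reproduces exactly the $Y_{\mu}(\Gam)$-norm, which is why $Y_{\mu}$, with its $\jpj$-weighted Hölder control, is the natural hypothesis. The threshold $\mu>1/2$ enters here, as the borderline for convergence of the resulting $j$-series of Hölder bounds (of essentially $\sum_j j^{-2\mu}$ type, produced by splitting Hölder differences into near/far pieces).

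For the interior statements $\bv\in C^{1,\mu}(\OM;l_1)\cap C^{2}(\OM;l_\INF)$ I would differentiate under the integral on a compact set $\OM'\Subset\OM$, on which $|\zeta-z|\ge\dist(\OM',\Gam)>0$. The term $A_{-n}$ is holomorphic, hence $C^{\infty}$ in $\OM$. Each $\del_z$- or $\ol\del_z$-derivative of the correction kernel costs a factor $\lesssim j/\dist(\OM',\Gam)$, so one derivative of $C_{-n}$ is controlled by $\sum_j\jpj|g_{-n-2j}|$ and two derivatives by $\sum_j\jpj^{2}|g_{-n-2j}|$, both finite because $\bg\in l^{1,2}_{\INF}(\Gam)$; the Hölder modulus of the first derivative is again handled by Plemelj--Privalov with a now smooth, bounded kernel. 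Summing over $n$ gives $\bv\in C^{1,\mu}(\OM;l_1)$; for the second derivative the $n$-sum would require a $\jpj^{3}$ weight, so one keeps $n$ fixed and takes the supremum, which yields $C^{2}(\OM;l_\INF)$.

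Finally, to show that $\bv$ is $L^2$-analytic I would apply $\ol\del_z$ and $\del_z$ componentwise under the integral (legitimate by the interior bounds just established), using $\ol\del_z(\zeta-z)^{-1}=0$, $\del_z(\zeta-z)^{-1}=(\zeta-z)^{-2}$, $\ol\del_z(\ol\zeta-\ol z)^{-1}=(\ol\zeta-\ol z)^{-2}$, together with $\ol\del_z\big(\tfrac{\ol\zeta-\ol z}{\zeta-z}\big)^{j}=-j(\ol\zeta-\ol z)^{j-1}(\zeta-z)^{-j}$ and $\del_z\big(\tfrac{\ol\zeta-\ol z}{\zeta-z}\big)^{j}=j(\ol\zeta-\ol z)^{j}(\zeta-z)^{-j-1}$. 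Because the correction series at index $-n$ and at index $-n-2$ involve the same densities $g_{-n-2j}$ (with the $j$-index shifted by one), a bookkeeping of these contributions --- which also absorbs $\del_z A_{-n-2}$ --- shows that the $(-n)$-component of $\ol\del\bv+L^2\del\bv$ vanishes for every $n\ge0$; this is precisely the computation underlying Finch's form \eqref{BukhgeimCauchyFormula} of Bukhgeim's kernel, and I would carry it out in full. I expect the main obstacle to be the second paragraph: the uniform-in-$j$, up-to-the-boundary Hölder analysis showing that the $j$-th correction term has $C^{\mu}(\ol\OM)$ seminorm $O(j)$ and that the associated $j$-series converges exactly when $\mu>1/2$; everything else is classical potential theory and shift bookkeeping.
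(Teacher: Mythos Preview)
The paper does not prove this theorem: immediately before the statement it says ``For the proof of the theorem below we refer to \cite[Proposition~2.3]{sadiqTamasan02}.'' So there is no in-paper argument to compare against; your sketch already goes well beyond what the paper itself supplies. That said, the ingredients you list---the decomposition into the classical Cauchy piece $A_{-n}$ and the Bukhgeim correction $C_{-n}$, the identification $\tfrac{d\zeta}{\zeta-z}-\tfrac{d\ol\zeta}{\ol\zeta-\ol z}=2\i\,d\arg(\zeta-z)$ as a finite positive measure, interior differentiation with $j$-polynomial losses controlled by the $l^{1,2}_\infty$ weight, and the shift bookkeeping that yields $\ol\del\bv+L^2\del\bv=0$---are exactly the ones used in the cited references, so your outline is on the right track.

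One point of your sketch does not survive as written. You assert that after bounding the $C^{\mu}$ seminorm of the $j$-th correction term by $O(j)\,\|g_{-n-2j}\|_{C^{\mu}}$, ``the $n$-sum merely reassembles the finitely many entries of $\bg$'' and reproduces the $Y_\mu$-norm. It does not: for fixed $m$, the mode $g_{-m}$ appears in the $(-n)$-component for every pair $(n,j)$ with $n+2j=m$, i.e.\ roughly $m/2$ times, and with the extra factor $j$ the total weight on $[g_{-m}]_{C^{\mu}}$ is of order $\sum_{j\le m/2} j\sim m^{2}$. Since $Y_\mu$ only controls $\sum_m\langle m\rangle[g_{-m}]_{C^{\mu}}$ (the $\langle m\rangle^{2}$ weight in $l^{1,2}_\infty$ applies to the sup part, not the H\"older increment), this naive double sum is one power short. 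The argument in \cite{sadiqTamasan01,sadiqTamasan02} handles the up-to-the-boundary H\"older estimate by a near/far splitting in which the $\mu>1/2$ threshold arises from balancing the two pieces, not merely from a $\sum j^{-2\mu}$ tail; your own final sentence correctly flags this paragraph as the crux, but the bookkeeping you propose there would need to be redone along those lines. The interior $C^{1,\mu}(\Omega;l_1)\cap C^2(\Omega;l_\infty)$ and the $L^2$-analyticity parts of your sketch are fine.
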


	Similar to the analytic maps, the traces of $L^2$-analytic maps  on the boundary must satisfy some constraints, which can be expressed in terms of a corresponding Hilbert-like transform introduced in  \cite{sadiqTamasan01}. More precisely, the Bukhgeim-Hilbert transform $\HT$ acting on  $\bg$, 
	\begin{align}\label{boldHg}
		\Gam \ni z\mapsto  (\HT \bg)(z)& = \langle (\HT \bg)_{0}(z), (\HT \bg)_{-1}(z),(\HT \bg)_{-2}(z),... \rangle
	\end{align}
	is defined component-wise for $n\geq 0$ by
	
	\begin{equation} \label{BHtransform}
		\begin{aligned} 
			(\HT \bg)_{-n}(z)&=\frac{1}{\pi }\int_\Gam \frac{ g_{-n}(\zeta)}{\zeta-z}d\zeta
			+ \frac{1}{\pi }\int_{\Gam} \left \{ \frac{d\zeta}{\zeta-z}-\frac{d \ol{\zeta}}{\ol{\zeta}-\ol{z}} \right \} \sum_{j=1}^{\infty}  
			g_{-n-2j}(\zeta)
			\left( \frac{\ol{\zeta}-\ol{z}}{\zeta-z} \right) ^{j},\; z\in \Gam,
		\end{aligned}
	\end{equation}
	and we refer to \cite{sadiqTamasan01} for its mapping properties. 
	
	The following result recalls the necessary and sufficient conditions for a sufficiently regular map to be the boundary value of an  $L^2$-analytic function.
	\begin{theorem}\label{NecSuf_BukhgeimHilbert_Thm}
		Let $0<\mu<1$, and $\B$ be the Bukhgeim-Cauchy operator in \eqref{BukhgeimCauchyFormula}.  \\
		Let $\bg = \langle g_{0}, g_{-1}, g_{-2},...\rangle\in Y_{\mu}(\Gam)$ for $\mu>1/2$ be defined on the boundary $\Gamma$, 
		and let $\HT$ be the Bukhgeim-Hilbert transform acting on $\bg$ as in \eqref{BHtransform}.
		
		(i) If $\bg$  is the boundary value of an $L^2$-analytic function, 
		then $\HT \bg\in C^{\mu}(\Gam;l_1)$ and satisfies 
		\begin{align} \label{NecSufEq}
			(I+ \i \HT) \bg = {\bf {0}}.
		\end{align}
		(ii)  If $\bg$  satisfies \eqref{NecSufEq}, then there exists an $L^2$-analytic function $ \bv :=\B \bg \in C^{1,\mu}(\OM;l_1)\cap C^{\mu}(\ol \OM;l_1)\cap C^2(\OM;l_\infty)$, such that
		\begin{align}\label{gdata_defn}
			\bv \lvert_{\Gam} = \bg.
		\end{align}
	\end{theorem}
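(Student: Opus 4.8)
The plan is to read Theorem~\ref{NecSuf_BukhgeimHilbert_Thm} as the $A$-analytic version of the classical Sokhotski--Plemelj characterization of traces of holomorphic functions; accordingly the proof rests on a single jump relation for the Bukhgeim--Cauchy operator. Concretely, I would first establish that for every $\bg\in Y_\mu(\Gam)$ with $1/2<\mu<1$, the map $\B\bg$ --- which by Theorem~\ref{BukhgeimCauchyThm} is $L^2$-analytic in $\OM$ and lies in $C^{1,\mu}(\OM;l_1)\cap C^\mu(\ol\OM;l_1)\cap C^2(\OM;l_\infty)$ --- has nontangential boundary trace
\begin{equation}\label{plan:plemelj}
	\left(\B\bg\right)\lvert_{\Gam}\;=\;\tfrac12\,\bg\;-\;\tfrac{\i}{2}\,\HT\bg,
\end{equation}
where the integrals in \eqref{BHtransform} defining $\HT\bg$ are taken in the principal value sense; together with the mapping property, proved in \cite{sadiqTamasan01}, that $\HT$ sends $Y_\mu(\Gam)$ into $C^\mu(\Gam;l_1)$. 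Granting \eqref{plan:plemelj}, both implications are immediate.

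For (i): if $\bg=\bv\lvert_{\Gam}$ with $\bv$ an $L^2$-analytic map, then Bukhgeim's Cauchy formula \eqref{Analytic} gives $\bv=\B[\bv\lvert_{\Gam}]=\B\bg$ on $\OM$, so $\bv$ enjoys the regularity of Theorem~\ref{BukhgeimCauchyThm} and $\HT\bg\in C^\mu(\Gam;l_1)$; letting $z\to\Gam$ in this identity and using \eqref{plan:plemelj},
\begin{equation*}
	\bg=\bv\lvert_{\Gam}=\tfrac12\bg-\tfrac{\i}{2}\HT\bg,
\end{equation*}
which is exactly $(I+\i\HT)\bg=\mathbf 0$, i.e. \eqref{NecSufEq}. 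For (ii): if $\bg\in Y_\mu(\Gam)$ satisfies \eqref{NecSufEq}, equivalently $-\i\HT\bg=\bg$, put $\bv:=\B\bg$; Theorem~\ref{BukhgeimCauchyThm} gives the asserted regularity and $L^2$-analyticity, and \eqref{plan:plemelj} together with the constraint yields $\bv\lvert_{\Gam}=\tfrac12\bg-\tfrac{\i}{2}\HT\bg=\tfrac12\bg+\tfrac12\bg=\bg$, which is \eqref{gdata_defn}.

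The crux --- and essentially the whole content --- is the jump relation \eqref{plan:plemelj}, which must be proved component by component. For the leading kernel in \eqref{BukhgeimCauchyFormula} this is the interior Plemelj--Sokhotski formula for the ordinary Cauchy integral with density $g_{-n}$: it produces the jump $\tfrac12 g_{-n}$ plus the principal value integral, which is the first term of $(\HT\bg)_{-n}$ up to the factor $1/(2\i)=-\i/2$. The real work is the second, anomalous term: one must show that as $z\to z_0\in\Gam$ nontangentially from inside, the series
\begin{equation*}
	\frac{1}{2\pi\i}\int_{\Gam}\left\{\frac{d\zeta}{\zeta-z}-\frac{d\ol\zeta}{\ol\zeta-\ol z}\right\}\sum_{j=1}^{\infty}g_{-n-2j}(\zeta)\left(\frac{\ol\zeta-\ol z}{\zeta-z}\right)^{j}
\end{equation*}
tends to the corresponding principal value boundary integral --- again up to $-\i/2$ --- with no additional jump surviving after summation. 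This uses: strict convexity of $\OM$, which keeps $(\ol\zeta-\ol z)/(\zeta-z)$ of unit modulus and Hölder along $\Gam$; the antisymmetrized differential $\{d\zeta/(\zeta-z)-d\ol\zeta/(\ol\zeta-\ol z)\}$, whose structure cancels the extra boundary jump coming from the anomalous part; and $\bg\in Y_\mu(\Gam)$ with $\mu>1/2$, whose weighted summabilities $\sum_j\jpj^2|g_{-j}(\xi)|<\infty$ and $\sum_j\jpj\,|g_{-j}(\xi)-g_{-j}(\eta)|/|\xi-\eta|^\mu<\infty$ let one interchange the limit $z\to z_0$ with the infinite sum and deliver the $C^\mu(\Gam;l_1)$ bound on $\HT\bg$. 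These term-by-term estimates are carried out in \cite{sadiqTamasan01,sadiqTamasan02}; assembling them is the substance of the argument, after which the two reductions above are purely formal.
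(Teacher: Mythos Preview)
The paper does not actually prove Theorem~\ref{NecSuf_BukhgeimHilbert_Thm}; it merely cites \cite[Theorem~3.2, Corollary~4.1, Proposition~4.2]{sadiqTamasan01} and \cite[Proposition~2.3]{sadiqTamasan02} and moves on. Your sketch is consistent with the approach in those references: the characterization is indeed obtained from a Plemelj--Sokhotski type jump formula for the Bukhgeim--Cauchy operator, the anomalous series term is handled via the convexity of $\Gamma$ and the weighted summability encoded in $Y_\mu(\Gamma)$, and the mapping property $\HT:Y_\mu(\Gamma)\to C^\mu(\Gamma;l_1)$ is the analytic workhorse. So your plan is correct and in fact supplies more detail than the present paper, which treats the result as an imported black box.
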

	For the proof  of  Theorem \ref{NecSuf_BukhgeimHilbert_Thm}  we refer to \cite[Theorem 3.2,  Corollary 4.1, and Proposition 4.2]{sadiqTamasan01} and \cite[Proposition 2.3]{sadiqTamasan02}.
	
	Another ingredient, in addition to $L^2$-analytic maps,  consists in the one-to-one relation between solutions
	$ \bu: = \langle u_{0}, u_{-1},u_{-2},... \rangle $
	satisfying
	\begin{align}\label{beltrami}
		\dba\bu +L^2 \del\bu+ aL\bu = \bzero,
	\end{align}
	and the $L^2$-analytic map $\bv = \langle v_{0}, v_{-1},v_{-2},... \rangle $ satisfying \eqref{Aanalytic}, 
	via a special function $h$, see \cite[Lemma 4.2]{sadiqScherzerTamasan} for details.
	The function $h$ is defined as 
	\begin{align}\label{hDefn}
		h(z,\btheta) := Da(z,\btheta) -\frac{1}{2} \left( I - \i H \right) Ra(z\cdot \btheta^{\perp}, \btheta^{\perp}),
	\end{align} where $\btheta^\perp$ is  the counter-clockwise rotation of $\btheta$ by $\pi/2$,
	$Ra(s, \btheta^{\perp}) = \ds \int_{-\INF}^{\INF} a\left( s \btheta^{\perp} +t \btheta \right)dt$ is the Radon transform in $\BR^2$  of the attenuation $a$,
	$Da(z,\btheta) =\ds \int_{0}^{\INF} a(z+t\btheta)dt$ is the divergent beam transform of the attenuation $a$, 
	and $\ds H h(s) = \ds \frac{1}{\pi} \int_{-\INF}^{\INF} \frac{h(t)}{s-t}dt $ is the classical Hilbert transform \cite{muskhellishvili}, 
	taken in the first variable and evaluated at $s = z \cdotp \btheta^{\perp}$. 
	The function $h$ appeared first in \cite{nattererBook} and enjoys the crucial property of having vanishing negative Fourier modes yielding the expansions
	\begin{align}\label{ehEq}
		e^{- h(z,\btheta)} := \sum_{k=0}^{\INF} \alpha_{k}(z) e^{\i k\tta}, \quad e^{h(z,\btheta)} := \sum_{k=0}^{\INF} \beta_{k}(z) e^{\i k\tta}, \quad (z, \btheta) \in \ol\OM \times \sph.
	\end{align}
	Using the Fourier coefficients of  $e^{\pm h}$,  define the operators $e^{\pm G} \bu $ component-wise for each $n \leq 0$, by 
	\begin{align}\label{eGop}
		(e^{-G} \bu )_n &= (\balpha \ast \bu)_n = \sum_{k=0}^{\infty}\alpha_{k} u_{n-k}, \quad \text{and} \quad 
		(e^{G} \bu )_n = (\bbeta \ast \bu)_n = \sum_{k=0}^{\infty}\beta_{k} u_{n-k}, \quad \text{where} \\ \nonumber
		&\ol \OM \ni z\mapsto \balpha(z) := \langle \alpha_{0}(z), \alpha_{1}(z),  ... , \rangle,  
		\quad \ol \OM \ni z\mapsto \bbeta(z) := \langle \beta_{0}(z), \beta_{1}(z),  ... , \rangle. 
	\end{align}
	We refer \cite[Lemma 4.1]{sadiqScherzerTamasan} for the properties of $h$, and we restate the following result \cite[Proposition 5.2]{sadiqTamasan01} to incorporate the operators $e^{\pm G}$ notation used in here.
	\begin{prop}\cite[Proposition 5.2]{sadiqTamasan01}\label{eGprop}
		Let $a\in C^{1,\mu}(\ol \OM)$, $\mu>1/2$. Then $\balpha , \del \balpha, \bbeta , \del \bbeta \in  l^{1,1}_{\INF}(\ol \OM)$, and 
		the operators maps
		\begin{align*}
			&(i) \,e^{\pm G}:C^{\mu} (\ol\OM ; l_{\INF}) \to C^{\mu} (\ol\OM ; l_{\INF}); \; (ii) \, e^{\pm G}:C^{\mu} (\ol\OM ; l_{1}) \to C^{\mu} (\ol\OM ; l_{1}); \; (iii) \, e^{\pm G}: Y_{\mu}(\Gam) \to Y_{\mu}(\Gam).
		\end{align*}
	\end{prop}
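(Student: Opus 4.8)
The plan is to deduce both parts of the statement from the angular regularity of $e^{\pm h}$ and $\del e^{\pm h}$ furnished by \cite[Lemma 4.1]{sadiqScherzerTamasan}, together with the classical Bernstein absolute-convergence theorem, and then to propagate the resulting sequence bounds through the one-sided convolution structure of $e^{\pm G}$. For the first part --- $\balpha,\del\balpha,\bbeta,\del\bbeta\in l^{1,1}_\INF(\ol\OM)$ --- recall from \eqref{hDefn} that $h$ is built from the divergent beam transform $Da$, the Radon transform $Ra$, and the Hilbert transform in the linear variable; since $a\in C^{1,\mu}(\ol\OM)$, \cite[Lemma 4.1]{sadiqScherzerTamasan} gives that for each fixed $z$ the maps $\btheta\mapsto e^{\mp h(z,\btheta)}$ and $\btheta\mapsto\del e^{\mp h(z,\btheta)}$ belong to $C^{1,\mu}(\sph)$, with norms bounded uniformly in $z$ and Hölder of exponent $\mu$ in $z$ (differentiating the beam integrals in $\btheta$ merely inserts a bounded weight $t$, and $H$ preserves $C^{0,\mu}$). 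The fact to invoke is elementary: if $\phi\in C^{1,\mu}(\sph)$ with $\mu>\tfrac12$, then $\phi'\in C^{0,\mu}(\sph)$, so Bernstein's theorem gives $\sum_k|k|\,|\widehat\phi(k)|=\sum_k|\widehat{\phi'}(k)|<\infty$, whence $\sum_k\langle k\rangle|\widehat\phi(k)|\le C_\mu\|\phi\|_{C^{1,\mu}(\sph)}$; this is exactly where the hypothesis $\mu>\tfrac12$ is used. By the vanishing-negative-modes property \eqref{ehEq}, the angular Fourier coefficients of $e^{-h(z,\cdot)}$, $\del e^{-h(z,\cdot)}$, $e^{h(z,\cdot)}$, $\del e^{h(z,\cdot)}$ are precisely $\langle\alpha_k(z)\rangle_{k\ge0}$, $\langle\del\alpha_k(z)\rangle_{k\ge0}$, $\langle\beta_k(z)\rangle_{k\ge0}$, $\langle\del\beta_k(z)\rangle_{k\ge0}$; applying the displayed bound and taking $\sup_z$ yields the four memberships, and running it on the differences $e^{\mp h(z,\cdot)}-e^{\mp h(w,\cdot)}$ (using the $C^\mu$-in-$z$ control from \cite[Lemma 4.1]{sadiqScherzerTamasan}) upgrades these to $\balpha,\bbeta\in C^{\mu}(\ol\OM;l_1)$, and likewise on $\Gam$.

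For the mapping properties, observe that for fixed $z$ the operator $e^{-G}$ is the one-sided convolution $(e^{-G}\bu)_{-N}=\sum_{k\ge0}\alpha_k(z)\,u_{-N-k}$, $N\ge0$ (and $e^{G}$ is the same with $\bbeta$). Trivially $\|e^{-G}\bu\|_{l_\INF}\le\|\balpha(z)\|_{l_1}\|\bu\|_{l_\INF}$, and since the sum runs only over $k\ge0$, the relabeling $M:=N+k\ge N$ (so $\langle N\rangle\le\langle M\rangle$) gives, for every $q\ge0$,
\[
\sum_{N\ge0}\langle N\rangle^{q}\bigl|(e^{-G}\bu)_{-N}\bigr|\;\le\;\Bigl(\sum_{k\ge0}|\alpha_k(z)|\Bigr)\sum_{M\ge0}\langle M\rangle^{q}\,|u_{-M}|,
\]
so $e^{-G}$ is bounded on $l_1$ and on $l^{1,q}_{\INF}$ as well, with operator norm $\le\sup_z\|\balpha(z)\|_{l_1}<\infty$ by the previous paragraph; the one-sidedness is essential here, since it means no decay of $\balpha$ beyond $l_1$ is required --- which is why $\balpha\in l^{1,1}_\INF$ already suffices although the norm of $Y_\mu$ involves $l^{1,2}_\INF$. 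For the Hölder dependence on the base point, I would split $e^{-G(z)}\bu(z)-e^{-G(w)}\bu(w)=(\balpha(z)-\balpha(w))\ast\bu(z)+\balpha(w)\ast(\bu(z)-\bu(w))$ and bound the two summands, in the appropriate norm from the list above, by $\|\balpha(z)-\balpha(w)\|_{l_1}\|\bu(z)\|+\|\balpha(w)\|_{l_1}\|\bu(z)-\bu(w)\|$ --- the first factor of the first summand being $O(|z-w|^\mu)$ by the $C^\mu(\,\cdot\,;l_1)$ bound, and the second summand being $O(|z-w|^\mu)$ since $\bu$ lies in the relevant Hölder space. Specializing the target space to $C^\mu(\ol\OM;l_\INF)$, $C^\mu(\ol\OM;l_1)$, and $Y_\mu(\Gam)$ (using $q=2$ for the $l^{1,2}_\INF$ part of the $Y_\mu$-norm and $q=1$ for its weighted Hölder seminorm) yields (i), (ii) and (iii) respectively, and using $\bbeta$ in place of $\balpha$ handles $e^{G}$.

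The hard part is the regularity input in the first paragraph --- establishing that the angular Fourier coefficients of $e^{\pm h}$ and $\del e^{\pm h}$ decay fast enough to remain summable against the weight $\langle k\rangle$, with $\mu>\tfrac12$ the exact threshold at which Bernstein's theorem applies. Everything after that is routine bookkeeping with one-sided convolutions; in substance the argument reproduces \cite[Proposition 5.2]{sadiqTamasan01}, recast in the $e^{\pm G}$ notation used here.
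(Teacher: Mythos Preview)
The paper does not supply its own proof of this proposition: it is quoted verbatim as \cite[Proposition 5.2]{sadiqTamasan01} and used as a black box, so there is no in-paper argument to compare against. Your sketch is therefore not in competition with anything here; it is a reconstruction of the cited result, and you say as much in your closing sentence.

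As a reconstruction it is sound in outline. The Bernstein step --- $\phi\in C^{1,\mu}(\sph)$, $\mu>\tfrac12$, implies $\sum_k\langle k\rangle|\widehat\phi(k)|<\infty$ --- is exactly the mechanism that produces $\balpha,\del\balpha,\bbeta,\del\bbeta\in l^{1,1}_\INF$, and your observation that the one-sidedness of the convolution ($k\ge0$, hence $\langle N\rangle\le\langle N+k\rangle$) lets a bare $l_1$ bound on $\balpha$ propagate any $l^{1,q}_\INF$ norm through $e^{\pm G}$ is the right structural point. The one place where your write-up is thin is the H\"older-in-$z$ control of $\|\balpha(z)-\balpha(w)\|_{l_1}$: applying Bernstein to the difference $e^{\mp h(z,\cdot)}-e^{\mp h(w,\cdot)}$ requires a \emph{mixed} regularity estimate (H\"older of exponent $\mu$ in $\theta$ on a function whose size is already $O(|z-w|^\mu)$), which is a genuine second-difference bound and does not follow from separate $C^\mu$ regularity in $z$ and in $\theta$. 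You are right that this is precisely what \cite[Lemma~4.1]{sadiqScherzerTamasan} (and the analogous lemma in \cite{sadiqTamasan01}) is designed to deliver, but in a self-contained write-up that step deserves an explicit statement rather than the phrase ``running it on the differences''.
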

	\begin{lemma}\cite[Lemma 4.2]{sadiqTamasan02}\label{beltrami_reduction}
		Let $a\in C^{1,\mu}(\ol \OM)$, $\mu>1/2$, and $e^{\pm G}$ be operators as defined in \eqref{eGop}. 
		
		(i) If $\bu \in C^1(\OM, l_1)$ solves $\ds \dba \bu +L^2 \del \bu+ aL\bu = \bzero$, then  $\ds \bv= e^{-G} \bu \in C^1(\OM, l_1)$ solves $\dba \bv + L^2\del \bv =\bzero$.
		
		(ii) Conversely, if $\bv \in C^1(\OM, l_1)$ solves $\dba \bv + L^2\del \bv =\bzero$, then $\ds \bu= e^{G} \bv \in C^1(\OM, l_1)$ solves $\ds \dba \bu +L^2 \del \bu+ aL\bu = \bzero$.
	\end{lemma}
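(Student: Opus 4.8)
The plan is to reduce everything to the single structural property of the special function $h$: for fixed $\btheta=(\cos\theta,\sin\theta)$ one has $\btheta\cdot\nabla_z h(z,\btheta)=-a(z)$, or equivalently, since $\btheta\cdot\nabla = e^{\i\theta}\del+e^{-\i\theta}\dba$,
\begin{align*}
 e^{\i\theta}\del\!\left(e^{-h}\right)+e^{-\i\theta}\dba\!\left(e^{-h}\right)=a\,e^{-h},
 \qquad
 e^{\i\theta}\del\!\left(e^{h}\right)+e^{-\i\theta}\dba\!\left(e^{h}\right)=-a\,e^{h},
\end{align*}
see \cite[Lemma 4.1]{sadiqScherzerTamasan}. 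Substituting the expansions \eqref{ehEq} and identifying coefficients of $e^{\i j\theta}$ for each $j\ge -1$ gives, with the convention $\alpha_{-1}=\alpha_{-2}=\beta_{-1}=\beta_{-2}\equiv 0$, the recursions
\begin{align*}
 \dba\alpha_k+\del\alpha_{k-2}=a\,\alpha_{k-1},
 \qquad
 \dba\beta_k+\del\beta_{k-2}=-a\,\beta_{k-1},\qquad k\ge 0;
\end{align*}
in particular $\dba\alpha_0=\dba\beta_0=0$. These are the only analytic facts used; the remainder is bookkeeping with the left shift $L$ and the one-sided convolution in \eqref{eGop}.

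For (i), set $\bv=e^{-G}\bu$, i.e. $v_n=\sum_{k\ge 0}\alpha_k u_{n-k}$ for $n\le 0$ (note $n-k\le 0$, so each series involves only components of $\bu$). Since $\balpha,\del\balpha\in l^{1,1}_\INF(\ol\OM)$ by Proposition \ref{eGprop}, and $\dba\balpha\in l^{1,1}_\INF(\ol\OM)$ as well by the $\alpha$-recursion together with $a\in C^{1,\mu}(\ol\OM)$, while $\bu\in C^1(\OM;l_1)$, the series for $v_n$ and those obtained by applying $\dba$ and $\del$ termwise converge in $l_1$ uniformly on compacta; hence $\bv\in C^1(\OM;l_1)$ and termwise differentiation is legitimate. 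Then
\begin{align*}
 \dba v_n+\del v_{n-2}
 =\sum_{k\ge 0}\alpha_k\bigl(\dba u_{n-k}+\del u_{n-k-2}\bigr)
 +\sum_{k\ge 0}\bigl(\dba\alpha_k+\del\alpha_{k-2}\bigr)u_{n-k}.
\end{align*}
By the component form $\dba u_m+\del u_{m-2}+a\,u_{m-1}=0$ of $\dba\bu+L^2\del\bu+aL\bu=\bzero$, the first sum equals $-a\sum_{k\ge 0}\alpha_k u_{n-k-1}$; by the $\alpha$-recursion the second equals $a\sum_{k\ge 0}\alpha_{k-1}u_{n-k}=a\sum_{j\ge 0}\alpha_j u_{n-1-j}$. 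The two cancel, so $\dba v_n+\del v_{n-2}=0$ for every $n\le 0$, i.e. $\dba\bv+L^2\del\bv=\bzero$.

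Part (ii) is the same computation for $\bu=e^{G}\bv$, $u_n=\sum_{k\ge 0}\beta_k v_{n-k}$: the first sum now vanishes because $\bv$ is $L^2$-analytic, and the $\beta$-recursion produces $-a\sum_{j\ge 0}\beta_j v_{n-1-j}$, which is exactly cancelled by the extra term $a\,u_{n-1}$ in $\dba\bu+L^2\del\bu+aL\bu$. Alternatively, since $e^{h}e^{-h}\equiv 1$ and both factors have only nonnegative Fourier modes, matching modes gives $(\bbeta\ast\balpha)_n=\delta_{n0}$, hence $e^{G}\circ e^{-G}=e^{-G}\circ e^{G}=I$ on sequences indexed by $\{n\le 0\}$, so (ii) follows from (i) by applying $e^{G}$. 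The one genuinely delicate point is the index bookkeeping — aligning $L$ in the two equations with the one-sided convolution by $\balpha$ (resp.\ $\bbeta$) and the truncation to nonpositive modes, and checking that the $l^{1,1}_\INF$ bounds on $\balpha,\del\balpha$ (plus the recursion for $\dba\balpha$) justify termwise differentiation; once the recursions above are recorded, the cancellation in (i) and (ii) is forced.
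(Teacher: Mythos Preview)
The paper does not supply its own proof of this lemma: it is quoted verbatim from \cite[Lemma~4.2]{sadiqTamasan02}, with the properties of $h$ deferred to \cite[Lemma~4.1]{sadiqScherzerTamasan}. So there is no in-paper argument to compare against.

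Your proof is correct and is essentially the standard one underlying the cited reference: derive the coefficient recursions $\dba\alpha_k+\del\alpha_{k-2}=a\,\alpha_{k-1}$ (and the $\beta$-version with the opposite sign) from $\btheta\cdot\nabla h=-a$ together with the vanishing of the negative Fourier modes of $e^{\pm h}$, then expand $\dba v_n+\del v_{n-2}$ as a one-sided convolution and observe the exact cancellation. The regularity justification via Proposition~\ref{eGprop} (for $\balpha,\del\balpha$) plus the recursion (for $\dba\balpha$) is the right way to legitimize termwise differentiation.

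One small remark on your ``alternative'' for (ii): knowing that $e^{G}$ and $e^{-G}$ are mutual inverses is not, by itself, enough to deduce (ii) from the bare implication in (i). What makes it work is that your computation in (i) actually establishes the stronger operator identity
\[
(\dba+L^2\del)\circ e^{-G}=e^{-G}\circ(\dba+L^2\del+aL),
\]
since the $\alpha$-recursion alone (without assuming $\bu$ is a solution) turns the ``coefficient'' sum into $a\,(e^{-G}L\bu)_n$. With that identity and the invertibility of $e^{-G}$, (ii) is immediate. Your direct computation for (ii) is of course fine on its own.
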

	\section{Reconstruction of a sufficiently smooth linearly anisotropic source}\label{sec:reconstruct}
	For an isotropic real valued vector field $\bF= \langle F_{1}, F_{2}\rangle$, and real map $f_0$, recall the boundary value problem \eqref{TransportScatEq1}:
	\begin{equation}\label{TransportScatEq2}
		\begin{aligned} 
			&\btheta\cdot\nabla u(z,\btheta) +a(z) u(z,\btheta) - \int_{\sph} k(z,\btheta \cdot \btheta')u(z,\btheta') d\btheta' = \underbrace{f_0(z) + \btheta \cdot \bF(z)}_{f(z, \btheta)}, \; (z,\btheta)\in \OM\times\sph,\\
			&u\lvert_{\Gamma_-}=0,
		\end{aligned}
	\end{equation}
	with an isotropic attenuation $a=a(z)$, and with the scattering kernel $k(z,\btheta,\btheta')=k(z,\btheta\cdot\btheta')$ depending polynomially on the angle between the directions,
	\begin{align}\label{k}  
		k(z,\cos \tta)  = k_0(z) +2\sum_{n=1}^{M} k_{-n}(z) \cos (n \tta),
	\end{align}
	for some fixed integer $M \geq1$.
	Note that, since $k(z, \cos \tta)$  is both real valued and even in $\tta$, the coefficient $k_{-n}$ is the  $(-n)^{th}$ Fourier coefficient of $k(z,\cos(\cdot))$. Moreover $k_{-n}$ is real valued, and $k_n(z)=k_{-n}(z) =\frac{1}{2\pi} \int_{-\pi}^{\pi} k(z,\cos \theta ) e^{i n\theta}d\theta$.
	
	For the real vector field $\bF= \langle F_{1}, F_{2}\rangle$, the real map $f_0$, and $\btheta = (\cos \tta , \sin \tta) \in \sph$, a calculation shows that the linear anisotropic source 
	\begin{align} \label{Ftta_Calc}
		f(z, \btheta) = f_0(z) + \btheta \cdot \bF(z)  &= f_0(z) + \ol{f_{1}(z)} e^{\i \tta}+ f_{1}(z) e^{-\i \tta}, \quad \text{where} \; f_{1} = \left( F_{1}+ \i F_{2} \right) /2.
	\end{align} 

	We assume that the coefficients $a, k_0,k_{-1},...,k_{-M}\in C^{3}(\ol\OM)$ are such that the forward problem \eqref{TransportScatEq2} has a unique solution $u\in L^p(\Omega\times\sph)$ for any $f\in L^p(\Omega\times\sph)$, $p > 1$, see Theorem \ref{analytic_Fredholm}. Moreover, we assume also an \emph{unknown} source of a priori regularity $f \in W^{3,p}(\ol \OM; \BR)$, $p > 4$, and by  Theorem \ref{u_reg_Wp} part (iii), the solution  $u \in W^{3,p}(\OM \times \sph)$ $p > 4$.
	Furthermore, the functions $a,k$ and source  $f$  are assumed real valued, so that the solution $u$ is also real valued.  
	
	Let $u(z,\btheta) = \sum_{-\infty}^{\infty} u_{n}(z) e^{in\tta}$ be the formal Fourier series representation of the solution of \eqref{TransportScatEq2} in the angular variable $\btheta=(\cos\tta,\sin\tta)$. Since $u$ is real valued, the Fourier modes $\{u_{n}\}$  occurs in complex-conjugate pairs $u_{-n}=\ol{u_n}$, and the angular dependence is completely determined by the sequence of its nonpositive Fourier modes 
	\begin{align}\label{boldu}
		\OM \ni z\mapsto  \bu(z)&: = \langle u_{0}(z), u_{-1}(z),u_{-2}(z),... \rangle.
	\end{align}
	
	For	the derivatives  $\del,\dba$  in the spatial variable as in \eqref{dbar_op}, the advection operator $\btheta \cdot\nabla$ in \eqref{TransportScatEq2}  becomes  $\btheta \cdot\nabla=e^{-\i \tta}\dba + e^{\i \tta}\del$.  By identifying the Fourier coefficients of the same order, the equation \eqref{TransportScatEq2} reduces to the system:
	\begin{align}\label{uzero_eq}
		\overline{\del} u_{1}(z)+\del u_{-1}(z) + [a(z)-k_0(z)]u_{0}(z)&=f_0(z), \\ \label{source_syseq}
		\overline{\del} u_{0}(z)+\del u_{-2}(z) +[a(z)-k_{-1}(z)]u_{-1}(z)&= f_{1}(z), \\ \label{finsys}
		\dba u_{-n}(z) +\del u_{-n-2}(z) + [a(z)-k_{-n-1}(z)]u_{-n-1}(z) &= 0,\quad 1 \leq n\leq M-1, \\ \label{infinite_sys}
		\dba u_{-n}(z) +\del u_{-n-2}(z) + a(z)u_{-n-1}(z) &=0,\qquad n\geq M,
	\end{align}
	where $f_{1}$ as in \eqref{Ftta_Calc}.
			By Hodge decomposition \cite{vladimirBook}, any vector field $\bF= \langle F_{1}, F_{2}\rangle\in
	H^1(\OM;\mathbb{R}^2)$ decomposes into a gradient field and a 	divergence-free (solenoidal) field :
	\begin{equation}\label{hodge}
	\bF=\nabla \varphi+ \bF^s,  \quad \varphi|_{\del\OM} =0, \, \Div \bF^s =0,
	\end{equation}
	where $\varphi \in H_{0}^2(\Omega;\BR)$ and $\bF^s = \langle F_{1}^s,F_{2}^s \rangle\in H_{\Div}^1(\OM;\BR^2):= \{\bF^s \in H^1(\OM;\BR^2): \Div \bF^s =0\}$.
	
	
Note that for $f_{1} = \left( F_{1}+ \i F_{2} \right) /2$, we	 have $\ds	4 \del f_1 =  \Div \bF + \i \curl \bF,$ and  using $	8 \dba \del  f_1 = 2 \Delta f_1 = \Delta F_1 + \i \Delta F_2$, we have   	
\begin{equation}\label{eq:DelF12}
	\begin{aligned}
		\Delta F_1 &= \del_{x_1} \Div \bF - \del_{x_2} \curl \bF, \; \text{and } 
		\Delta F_2 = \del_{x_2} \Div \bF + \del_{x_1} \curl \bF. 
	\end{aligned}
\end{equation}	

Moreover, for $f_{1}^s = \left( F_{1}^s+ \i F_{2}^s \right) /2$, the Hodge decomposition \eqref{hodge} can be rewritten as 
\begin{equation} \label{hodge_complexification}
	f_1 = \bar{\partial}\varphi + f_1^s, \quad \varphi|_{\partial \Omega}=0,\quad \mathbb{R}e(\partial f_1^s) =0.
\end{equation}

	\begin{theorem}\label{Th:LuFs}
	Let $\OM\subset\mathbb{R}^2$ be a strictly convex bounded domain, and $\Gamma$ be its boundary. Consider the boundary value problem \eqref{TransportScatEq2} for some known real valued $a, k_0,k_{-1},...,k_{-M}\in C^{3}(\ol\OM)$ such that \eqref{TransportScatEq2} is well-posed. 
	If scalar and vector field sources $f_0$ and $\bF$ are real valued, $W^{3,p}( \OM; \BR)$ and $W^{3,p}( \OM; \BR^2)$-regular, respectively, with $p > 4$, 
	then $u\lvert_{\Gamma_+}$ uniquely determine  the solenoidal part $\bF^s$ in $\Omega$ and $u - u_0$ in $\OM$, where $u_0$ is the zeroth Fourier mode of $u$ in the angular variable.
\end{theorem}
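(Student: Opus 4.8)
The plan is to exploit the finite Fourier content of the scattering kernel, which decouples the system \eqref{uzero_eq}--\eqref{infinite_sys} into a tail (the truly "non-scattering-like" part for modes $n \geq M$) and a finite elliptic block. First I would package the nonpositive Fourier modes of $u$ into the sequence $\bu = \langle u_0, u_{-1}, u_{-2}, \ldots\rangle$ as in \eqref{boldu}, and observe that \eqref{infinite_sys} says that the shifted tail $\bw := \langle u_{-M}, u_{-M-1}, \ldots\rangle$ satisfies the Bukhgeim--Beltrami equation $\dba \bw + L^2\del\bw + aL\bw = \bzero$ exactly in the form \eqref{beltrami}. By the regularity hypothesis $f \in W^{3,p}$ with $p>4$ and Theorem \ref{u_reg_Wp}(iii), $u \in W^{3,p}(\OM\times\sph)$, which by Sobolev embedding gives $u \in C^{2,\mu}(\ol\OM\times\sph)$ for some $\mu \in (1/2,1)$; hence the traces $u_{-n}|_\Gamma$ assemble into an element of $Y_\mu(\Gamma)$. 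The data $u|_{\Gamma_+} = g_{\scaleto{f_0,\bF}{4.75pt}}$ determines all the boundary Fourier modes $u_{-n}|_\Gamma$, hence the full boundary sequence $\bw|_\Gamma$.

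Next I would invoke Lemma \ref{beltrami_reduction}(i): conjugating by the special weight, $\bv := e^{-G}\bw$ is $L^2$-analytic, i.e. solves \eqref{Aanalytic}. Its boundary trace $\bv|_\Gamma = e^{-G}(\bw|_\Gamma) \in Y_\mu(\Gamma)$ is known from the data, so by Theorem \ref{BukhgeimCauchyThm} (the Bukhgeim--Cauchy formula) we recover $\bv$ in all of $\OM$, and then $\bw = e^{G}\bv$ in $\OM$ via Lemma \ref{beltrami_reduction}(ii). This recovers $u_{-n}(z)$ for all $n \geq M$ inside $\OM$. With the tail in hand, I would descend through the finite block \eqref{finsys} for $n = M-1, M-2, \ldots, 1$: each equation expresses $\del u_{-n-2}$ together with the known $\dba u_{-n}$ and the coefficient $(a - k_{-n-1})u_{-n-1}$; but to actually solve for the lower modes one should instead proceed upward or recognize that \eqref{finsys} is an overdetermined elliptic system in the unknowns $(u_{-1}, \ldots, u_{-M-1})$ with known top data $u_{-M-1}, u_{-M-2}$ (from the tail step). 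Concretely, for $n = M-1$: $\dba u_{-M+1} + \del u_{-M-1} + (a - k_{-M})u_{-M} = 0$ is, once $u_{-M-1}$ and $u_{-M}$ are known, a $\dba$-equation for $u_{-M+1}$ whose boundary value is given by the data; solving this inhomogeneous Cauchy--Riemann problem (which is uniquely solvable given Dirichlet data on $\Gamma$ since the domain is simply connected — the solution being unique because $u_{-M+1}|_\Gamma$ is prescribed) yields $u_{-M+1}$ in $\OM$. Iterating downward to $n=1$ recovers $u_{-1}, u_{-2}, \ldots$, i.e. all modes $u_{-n}$, $n\geq 1$. Together with their conjugates $u_n = \ol{u_{-n}}$ this gives $u - u_0$ in $\OM$, proving the second assertion.

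For the solenoidal part $\bF^s$, I would use equation \eqref{source_syseq}: $\dba u_0 + \del u_{-2} + (a - k_{-1})u_{-1} = f_1$. Taking $\del$ of the conjugate relation and exploiting that the terms $u_{-1}, u_{-2}$ and $u_0$'s derivatives combine appropriately, or more directly: applying $\del$ to \eqref{source_syseq} gives $\del\dba u_0 + \del^2 u_{-2} + \del[(a-k_{-1})u_{-1}] = \del f_1$, and separately taking the complex conjugate of \eqref{source_syseq} (which reads $\del u_0 + \dba u_2 + (a-k_{-1})u_1 = \ol{f_1}$, using $a, k_{-1}, u_0$ real) and applying $\dba$ yields $\dba\del u_0 + \dba^2 u_2 + \dba[(a-k_{-1})u_1] = \dba\ol{f_1}$. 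Subtracting eliminates the $\del\dba u_0$ term and expresses $2\i\,\mathrm{Im}(\del f_1)$ — equivalently $\curl\bF$ up to a constant, via $4\del f_1 = \Div\bF + \i\,\curl\bF$ — entirely in terms of the already-recovered modes $u_{\pm 1}, u_{\pm 2}$. Thus $\curl\bF$ is determined in $\OM$. By the Hodge decomposition \eqref{hodge}, $\bF^s$ is the unique divergence-free field with $\curl\bF^s = \curl\bF$ (and tangential/boundary conditions inherited from the decomposition), so $\curl\bF$ determines $\bF^s$. This is consistent with the non-uniqueness phenomenon: only $\curl\bF$, hence $\bF^s$, is visible, while the potential $\varphi$ in $\bF = \nabla\varphi + \bF^s$ is lost.

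The main obstacle I expect is the regularity bookkeeping at the interface between the elliptic tail and the finite block — ensuring that the recovered $\bw = e^G\bv$ has enough smoothness ($C^{1,\mu}$ say) that its modes can be fed as data into the successive $\dba$-problems of \eqref{finsys}, and that the descent preserves the $C^{2,\mu}$-type regularity needed for Theorem \ref{BukhgeimCauchyThm} and Lemma \ref{beltrami_reduction} to apply at each stage; this is exactly why the hypothesis $f \in W^{3,p}$, $p>4$, is imposed, so the work is in checking that the loss of one derivative per algebraic manipulation is affordable. A secondary point requiring care is verifying that the data $g_{\scaleto{f_0,\bF}{4.75pt}}$ on $\Gamma_+$ genuinely determines the \emph{full} angular trace $u|_\Gamma$ (not just on the outgoing bundle) — but this follows since $u|_{\Gamma_-} = 0$ by the boundary condition in \eqref{TransportScatEq2}, so $u|_\Gamma$ is known on all of $\Gamma\times\sph$, and its Fourier modes in $\btheta$ are therefore determined.
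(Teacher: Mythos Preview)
Your recovery of the tail $\langle u_{-M}, u_{-M-1}, \ldots\rangle$ via the $e^{\pm G}$ conjugation and the Bukhgeim--Cauchy formula matches the paper exactly, and your treatment of the finite block \eqref{finsys} is a minor variant: the paper applies $4\del$ to each $\dba$-equation to obtain a Dirichlet problem for the Laplacian (see \eqref{Transport_UM}), while you solve the inhomogeneous $\dba$-equation directly with the prescribed boundary trace. For the \emph{uniqueness} statement both are valid, since the difference of two candidates is holomorphic and vanishes on $\Gamma$; the paper's Poisson formulation is simply the more standard well-posed reconstruction. So your argument for $u-u_0$ is fine.

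There is, however, a genuine gap in your recovery of $\bF^s$. Your subtraction correctly isolates $\curl\bF = 4\,\im(\del f_1)$ in terms of the already-known $u_{\pm 1}, u_{\pm 2}$. But $\curl\bF$ alone does \emph{not} determine $\bF^s$ in the decomposition \eqref{hodge}: two divergence-free fields with the same curl differ by the gradient of a harmonic function, and the phrase ``boundary conditions inherited from the decomposition'' hides the problem --- $\bF^s = \bF - \nabla\varphi$ carries boundary information from $\bF$ itself, which you do not know. By subtracting to eliminate the $\del\dba u_0$ term you discarded precisely the piece needed to fix this ambiguity. The paper instead takes the \emph{real} part of \eqref{takede}: since $u_0$ is real and $\Div\bF = \Delta\varphi$, this yields the Dirichlet problem
\[
\Delta(u_0-\varphi) = -4\,\re\!\left[\del^2 u_{-2}+\del((a-k_{-1})u_{-1})\right],\qquad (u_0-\varphi)\big|_\Gamma = g_0,
\]
using $\varphi|_\Gamma = 0$. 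Solving it gives $u_0-\varphi$ in $\OM$, and then \eqref{source_syseq} combined with \eqref{hodge_complexification} yields $f_1^s = \dba(u_0-\varphi) + \del u_{-2} + (a-k_{-1})u_{-1}$ directly. The missing idea in your route is to exploit the known boundary value $u_0|_\Gamma = g_0$ (together with $\varphi|_\Gamma = 0$) to pin down the harmonic freedom; without it, the step ``$\curl\bF$ determines $\bF^s$'' does not go through.
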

\begin{proof}
	Let $u$ be the solution of the boundary value problem \eqref{TransportScatEq2} and let $\bu= \langle u_{0}, u_{-1}, u_{-2}, ... \rangle $  be the sequence valued map of its non-positive Fourier modes.
	Since the scalar field $f_0 \in W^{3,p}(\OM;\BR),\,p > 4$, and isotropic vector field $\bF \in W^{3,p}(\OM;\BR^2), \,p > 4$, then the anisotropic source $f = f_0 + \btheta \cdot \bF $ belong to $W^{3,p}(\OM \times \sph)$ for $p > 4$. By applying Theorem \ref{u_reg_Wp} (iii), we have $u \in W^{3,p}(\Omega\times\sph), \,p > 4$. Moreover, by the Sobolev embedding \cite{adam},
	$ W^{3,p}(\Omega\times\sph) \subset C^{2,\mu}(\ol \OM\times\sph)$ with 
	$\mu =1-\frac{2}{p}>\frac{1}{2}$, we have  $u\in C^{2,\mu}(\ol \OM\times\sph)$,
	and thus, by \cite[Proposition 4.1 (ii)]{sadiqTamasan01}, the sequence valued map $\bu \in Y_{\mu}(\Gam)$. 
	
		Since $\bF \in W^{3,p}(\OM;\BR^2), p >4$, then by compact imbedding of Sobolev spaces  \cite{adam},  $\bF \in H^1(\OM;\BR^2)$. By Hodge decomposition \eqref{hodge}, 	field $ \ds \bF = \nabla \varphi +\bF^s, $ with $ \varphi|_{\Gam} =0,$ and $ \Div \bF^s =0$. 
	
	We note from \eqref{infinite_sys} that the shifted sequence valued map $L^M\bu=\langle u_{-M},u_{-M-1},u_{-M-2},...\rangle$ solves 
	\begin{align}\label{Lmu_beltrami}
		\dba L^{M} \bu(z) +L^2 \del L^{M} \bu(z)+ a(z)L^{M+1}\bu(z) = \bzero,\quad z\in \OM.
	\end{align}
	
	Let $\ds \bv := e^{-G} L^M \bu $, then by Lemma \ref{beltrami_reduction}, and the fact that the operators $e^{\pm G}$ commute with the left translation, $ [e^{\pm G}, L]=0$, the sequence $\ds \bv = L^M e^{-G}  \bu$ solves $\dba \bv + L^2\del \bv =\bzero$, i.e $\bv$ is $L^2$ analytic.
	
	By \eqref{data}, the data $u|_{\Gamma_+} = g$ determines $L^M\bu$ on $\Gamma$. 
	By Proposition \ref{eGop} (iii), and  the convolution formula, traces $L^M\bu \lvert_{\Gamma}$  determines the traces $\bv \in Y_{\mu}(\Gam)$ on $\Gamma$.
	
	Since $\bv \lvert_{\Gamma}  \in  Y_{\mu}(\Gam)$ is the boundary value of an $L^2$-analytic function in $\OM$, then Theorem \ref{NecSuf_BukhgeimHilbert_Thm} (i) yields 
	\begin{align}\label{Cond1}
		[I+\i\HT] \bv \lvert_{\Gamma}={\bf {0}},
	\end{align}where $\HT$ is the Bukhgeim-Hilbert transform in \eqref{BHtransform}.
	
	From $\bv $ on $\Gam$, we use the Bukhgeim-Cauchy Integral formula \eqref{BukhgeimCauchyFormula} to 
	construct the sequence valued map $\bv $ inside $\OM$.  By Theorem \ref{BukhgeimCauchyThm} and Theorem \ref{NecSuf_BukhgeimHilbert_Thm} (ii),
	the constructed sequence valued  $\bv\in C^{1,\mu}(\OM;l_1)\cap C^{\mu}(\ol \OM;l_1)\cap C^2(\OM;l_\infty)$ is $L^2$-analytic in $\OM$.
	
	We use again the convolution formula  $\ds L^M \bu = e^{G}\bv $, and determine modes $u_{-n}$ now inside $\OM$, for $n \geq M$. In particular, we recover modes $u_{-M-1}, u_{-M}\in C^2(\OM)$.
	
	Recall that the modes $u_{-1}, u_{-2}, \cdots, u_{-M}, u_{-M-1}$ satisfy
	\begin{subequations} \label{Dirichlet_inhomDbar}
		\begin{align} \label{dbaU_eq}
			\dba u_{-M+j} &= -\del u_{-M+j-2} - \left[ (a- k_{-M+j-1}) u_{-M+j-1} \right], \quad 1\leq j \leq M-1,\\ 
			\label{u_Lambda}  u_{-M+j} \lvert_{\Gamma} &= g_{-M+j}.
		\end{align}
	\end{subequations}

	By applying $4 \del $ to \eqref{dbaU_eq}, the mode $u_{-M+1}$ (for $ j=1$) is then the solution to the Dirichlet problem for the  Poisson equation 
	\begin{subequations} \label{Transport_UM}
		\begin{eqnarray} \label{Poisson_UM}
			\Delta u_{-M+1} &=& -4\del^2 u_{-M-1} -4\del \left[ (a- k_{-M}) u_{-M} \right],\\  
			\label{UM_Gam} 
			u_{-M+1} \lvert_{\Gam} &=& g_{-M+1},
		\end{eqnarray} where the right hand side of \eqref{Transport_UM} is known.
	\end{subequations}
	
	We solve repeatedly \eqref{Transport_UM} for $j=2,...,M-1$ in \eqref{Dirichlet_inhomDbar},  to  recover  
	\begin{align}\label{recoveru_Ms1}
		u_{-M+1}, u_{-M+2}, \cdots, u_{-1}, \quad \text { in } \OM.
	\end{align}
	From $L^M \bu$ and \eqref{recoveru_Ms1}, $L \bu = \langle u_{-1}, u_{-2}, ... \rangle $ is determined in $\OM$. Thus $u - u_0$ is determined in $\OM$.
	
	Since
	$u_0,u_{-1},u_{-2}\in C^2(\OM)$, we can take $4\partial$ on both
	sides of the equation \eqref{source_syseq} to get
	\begin{align}\label{takede}
		\Delta u_0+ 4\del^2 u_{-2}+4\del ( [a-k_{-1}]u_{-1} )=4\del f_1 = 	 \Div \bF + \i \curl \bF.
	\end{align}
	Moreover, since $u_0$ is real valued and $\Div \bF= \Delta \varphi$, by equating the real part in \eqref{takede} we get
	the boundary value problem:
	\begin{subequations} \label{reconst_uzero1}
		\begin{eqnarray} \label{Poisson_uzero1}
			\Delta( u_0-\varphi) &=& - 4 \re \left[\del^2 u_{-2} +\del ( [a-k_{-1}]u_{-1} ) \right],\\  
			\label{uzero_Gam1} 
			(u_0-\varphi) \lvert_{\Gam} &=& g_0,
		\end{eqnarray} where the right hand side of \eqref{reconst_uzero1} is known.
	\end{subequations}\\
	Thus, real valued $\ds  (u_0-\varphi)$ is recovered in $\OM$, by solving the Dirichlet problem for the above  Poisson equation \eqref{reconst_uzero1}.{\Large }

	From \eqref{source_syseq} and  using $f_{1}^s = f_{1}  - \bar{\partial} \varphi$	
	from \eqref{hodge_complexification}, we get 
	\begin{align}\label{eq:f1s}
		f_1^s := \overline{\del}(u_{0}(z)-\varphi(z))+\del u_{-2}(z) +[a(z)-k_{-1}(z)]u_{-1}(z), \quad z \in \OM,
	\end{align}
	with $f_1^s$ satisfying	$\mathbb{R}e(\partial f_1^s) =0.$	
	
	Thus, the solenoidal part $\bF^s = \langle 2 \re {f_1^s}, 2\im{f_1^s} \rangle$, of the vector field $\bF$ is recovered in $\OM$.
\end{proof}

If we know apriori that the vector field $\bF$ is incompressible (i.e divergenceless), then we can  reconstruct both scalar field source $f_0$ and vector field source $\bF $ in $\OM$. 
\begin{theorem}\label{main_divfree}
	Let $\OM\subset\mathbb{R}^2$ be a strictly convex bounded domain, and $\Gamma$ be its boundary. Consider the boundary value problem \eqref{TransportScatEq2} for some known real valued $a, k_0,k_{-1},...,k_{-M}\in C^{3}(\ol\OM)$ such that \eqref{TransportScatEq2} is well-posed. 
	If the unknown scalar field source $f_0$ and divergenceless vector field sources $\bF$ are real valued, $W^{3,p}( \OM; \BR)$ and $W^{3,p}( \OM; \BR^2)$-regular, respectively, with $p > 4$, 
	then the data $g_{\scaleto{f_0,\bF}{4.75pt}}$  defined in \eqref{data} uniquely determine both $f_0$ and $\bF$ in $\OM$. 
\end{theorem}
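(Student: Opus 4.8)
The plan is to obtain this as an essentially immediate consequence of Theorem~\ref{Th:LuFs}. That theorem's proof already reconstructs, from the data \eqref{data}, three things in $\OM$: the solenoidal part $\bF^s$ of $\bF$, the modes $u_{-1},u_{-2},\dots$ (equivalently $u-u_0$), and the combination $u_0-\varphi$, where $\varphi$ is the Hodge potential in $\bF=\nabla\varphi+\bF^s$ with $\varphi|_{\Gamma}=0$ and $\Div\bF^s=0$ as in \eqref{hodge}. The first thing I would do is exploit the extra hypothesis $\Div\bF=0$: it forces $\Delta\varphi=\Div\bF=0$ in $\OM$ together with $\varphi|_{\Gamma}=0$, so $\varphi\equiv 0$ by uniqueness for the Dirichlet problem — equivalently, by uniqueness of the Hodge decomposition one has $\bF^s=\bF$. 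Hence $\bF$ itself is recovered in $\OM$, and simultaneously $u_0=u_0-\varphi$ is recovered in $\OM$; together with the already reconstructed modes $u_{-1},u_{-2},\dots$ this pins down the whole sequence $\bu$, and therefore $u$, inside $\OM$.

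It then remains only to recover $f_0$, and for this I would use the zeroth-mode equation \eqref{uzero_eq}, $\overline{\del}u_1+\del u_{-1}+[a-k_0]u_0=f_0$. Since $u$ is real valued we have $u_1=\overline{u_{-1}}$, and because the Cauchy--Riemann operators in \eqref{dbar_op} satisfy $\overline{\del}\,\overline{w}=\overline{\del w}$, the left-hand side simplifies to $2\re(\del u_{-1})+[a-k_0]u_0$. Thus $f_0=2\re(\del u_{-1})+[a-k_0]u_0$ in $\OM$, and every term on the right has already been reconstructed: $u_{-1},u_0\in C^2(\OM)$ by the Sobolev embedding $W^{3,p}\subset C^{2,\mu}$ with $\mu=1-2/p>1/2$ used in the proof of Theorem~\ref{Th:LuFs}, so $\del u_{-1}$ is well defined, and $a,k_0$ are given. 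This yields $f_0$ in $\OM$ and completes the proof.

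As the argument is a corollary of Theorem~\ref{Th:LuFs}, I do not expect a genuine obstacle; the one point deserving care is exactly the observation that divergence-freeness collapses the Hodge potential, so that the object called ``the solenoidal part of $\bF$'' in Theorem~\ref{Th:LuFs} is in fact all of $\bF$, and the object ``$u_0-\varphi$'' is in fact $u_0$ — after which the single scalar identity \eqref{uzero_eq} recovers $f_0$ pointwise with no further analysis. A minor bookkeeping check is that the regularity already in force suffices to apply $\del$ to $u_{-1}$, which it does.
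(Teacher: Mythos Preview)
Your proposal is correct and follows essentially the same route as the paper: invoke Theorem~\ref{Th:LuFs} (and its proof) to recover $\bF^s$, the modes $u_{-n}$ for $n\ge 1$, and $u_0-\varphi$; use $\Div\bF=0$ together with $\varphi|_\Gamma=0$ to force $\varphi\equiv 0$, hence $\bF=\bF^s$ and $u_0$ are both recovered; then read off $f_0=2\re(\del u_{-1})+(a-k_0)u_0$ from \eqref{uzero_eq}. The paper's proof is the same in substance, merely restating the recovery of $u_0$ as solving the Dirichlet problem \eqref{reconst_uzero1} with $\varphi\equiv 0$ rather than saying ``$u_0-\varphi$ is already known and equals $u_0$.''
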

\begin{proof}
	
	Let $u$ be the solution of the boundary value problem \eqref{TransportScatEq2} and let $\bu= \langle u_{0}, u_{-1}, u_{-2}, ... \rangle $  be the sequence valued map of its non-positive Fourier modes, 
	Since the isotropic scalar and vector field $f_0 \in W^{3,p}(\OM;\BR)$, and $\bF \in W^{3,p}(\OM;\BR^2)$ respectively for $p > 4$, then the anistropic source $f = f_0 + \btheta \cdot \bF \in W^{3,p}(\OM \times \sph)$ and by applying Theorem \ref{u_reg_Wp} (iii), we have $u \in W^{3,p}(\Omega\times\sph)$. By the Sobolev embedding \cite{adam},
	$ W^{3,p}(\Omega\times\sph) \subset C^{2,\mu}(\ol \OM\times\sph)$ with 
	$\mu =1-\frac{2}{p}>\frac{1}{2}$, we have  $u\in C^{2,\mu}(\ol \OM\times\sph)$,
	and thus, by \cite[Proposition 4.1 (ii)]{sadiqTamasan01},  $\bu \in Y_{\mu}(\Gam)$. 
	
	Since $\bF \in W^{3,p}(\OM;\BR^2), p >4$, then by compact imbedding of Sobolev spaces  \cite{adam},  $\bF \in H^1(\OM;\BR^2)$. By Hodge decomposition \eqref{hodge}, 	field $ \ds \bF = \nabla \varphi +\bF^s, $ with $ \varphi|_{\Gam} =0,$ and $ \Div \bF^s =0$. 
	
	If we know apriori that the vector field $\bF$ is incompressible (i.e divergenceless $\nabla \cdot \bF = 0$).  Then $ \Laplace \varphi = \Div \bF = 0$ and $\varphi|_{\del\OM} =0$  implies $\varphi \equiv 0$ inside $\OM$. Thus, vector field  $\bF = \bF^s$ inside $\Omega$.
	
	By Theorem \ref{Th:LuFs}, the data $u\lvert_{\Gamma_+}= g_{\scaleto{f_0,\bF}{4.75pt}}$
	uniquely determine the solenoidal field $\bF^s= \bF$ in $\Omega$ by equation \eqref{eq:f1s} with $\varphi \equiv 0$, and  the sequence valued map  $L\bu= \langle u_{-1}, u_{-2}, ... \rangle $ in $\OM$. Moreover, the  real valued mode $\ds u_0$ is also then recovered (with $\varphi \equiv 0$  ) in $\OM$, by solving the Dirichlet problem for the Poisson equation \eqref{reconst_uzero1}.


%
	
	Thus, from modes $ u_{-1}$ and $u_0$, the scalar field $\displaystyle f_0$ is recovered in $\OM$ by 
	\begin{align}\label{reconst_fzero_div}
	f_0: = 	2 \re [\del u_{-1}] + [a-k_0]u_{0}.
	\end{align}
	
\end{proof}

	In the radiative transport literature, the attenuation coefficient $a = \sigma_a + \sigma_s$, where $\sigma_a $ represents pure loss due to absorption and 
	$\sigma_s(z) = \frac{1}{2\pi} \int_{0}^{2\pi} k(z,\theta) d \theta = k_0(z)$ is the isotropic part of scattering kernel.
	We consider the subcritical region: 
	\begin{align} \label{sub_cond}
		\sigma_a : = a - k_0 \geq \delta >0, \quad \text{for some positive constant } \delta. 
	\end{align}
	\begin{remark}
		In addition to the hypothesis to Theorem \ref{Th:LuFs}, if we assume that coefficients $a, k_0$ satisfies \eqref{sub_cond}, then in the region $\{z\in\OM :~f_0(z)=0\}$, one can recover explicitly the entire vector field  $\bF= \langle 2 \re {f_1}, 2\im{f_1} \rangle $. 
		Indeed,  the equation (\ref{uzero_eq})  gives $u_0=-2\re(\del u_{-1})/\sigma_a$ and,
		following (\ref{source_syseq}), 
		the vector field $\bF$ can be recovered by the formula
		\begin{equation}
		f_1=\del u_{-2}+[a-k_{-1}] u_{-1}-2\overline{\del} \left(\frac{\re(\del
				u_{-1})}{\sigma_a}\right).
		\end{equation}
	\end{remark}
%
	%
	%



    Next, we show that one can also determine both scalar field $f_0$ and vector field $\bF$, if one has the additional data $ g_{\scaleto{f_0,\bzero}{4.75pt}}$ (or $ g_{\scaleto{0,\bF}{4.75pt}}$) information, instead of $\bF$ being incompressible as in Theorem \ref{main_divfree}.
    
	
	\begin{theorem}\label{Th_2data}
		Let $\OM\subset\mathbb{R}^2$ be a strictly convex bounded domain, and $\Gamma$ be its boundary. Consider the boundary value problem \eqref{TransportScatEq2} for some known real valued $a, k_0,k_{-1},...,k_{-M}\in C^{3}(\ol\OM)$ such that \eqref{TransportScatEq2} is well-posed. 
		If the unknown scalar field source $f_0$ and vector field source $\bF$ are real valued, $W^{3,p}( \OM; \BR)$ and $W^{3,p}( \OM; \BR^2)$-regular, respectively, with $p > 4$, 
		and coefficients $a, k_0$ satisfying \eqref{sub_cond},  then the data $g_{\scaleto{f_0,\bF}{4.75pt}}$ and $ g_{\scaleto{f_0,\bzero}{4.75pt}}$ defined in \eqref{data} uniquely determine both $f_0$ and $\bF$ in $\OM$. 
	\end{theorem}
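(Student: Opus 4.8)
The plan is to use linearity to reduce the recovery of $\bF$ to the purely anisotropic case $f_0\equiv 0$, which is already controlled under the subcritical assumption, while the second data set takes care of $f_0$ by itself. Let $u$ be the solution of \eqref{TransportScatEq2} driven by the source $f_0+\btheta\cdot\bF$, and let $w$ be the solution of \eqref{TransportScatEq2} driven by the isotropic source $f_0$ alone (i.e. with vector field $\bzero$); both are unique by well-posedness, and both lie in $W^{3,p}(\OM\times\sph)$ by Theorem \ref{u_reg_Wp}(iii). The first step is to recover $f_0$: the datum $g_{\scaleto{f_0,\bzero}{4.75pt}}$ is exactly the exit trace $w|_{\Gamma_+}$ of a solution whose vector source $\bzero$ is trivially divergence-free, so Theorem \ref{main_divfree} applies and returns $f_0$ in $\OM$. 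Note this step alone does not invoke \eqref{sub_cond}.

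Next I would set $v:=u-w$. By linearity $v$ solves \eqref{TransportScatEq2} with the purely anisotropic source $\btheta\cdot\bF$ and $v|_{\Gamma_-}=0$; it is real valued, belongs to $W^{3,p}(\OM\times\sph)\subset C^{2,\mu}(\ol\OM\times\sph)$ with $\mu=1-2/p>1/2$, and its exit data $v|_{\Gamma_+}=g_{\scaleto{f_0,\bF}{4.75pt}}-g_{\scaleto{f_0,\bzero}{4.75pt}}$ is known. Since $v$ meets the hypotheses of Theorem \ref{Th:LuFs} with scalar source $0\in W^{3,p}$ and vector source $\bF\in W^{3,p}$, the reconstruction in the proof of that theorem recovers $v-v_0$ in $\OM$; in particular the modes $v_{-1}$ and $v_{-2}$ are obtained throughout $\OM$, with enough regularity (as in that proof) to justify the differentiations below.

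Because the isotropic part of the source for $v$ vanishes identically on $\OM$, I can then run the argument of the Remark above on all of $\OM$. Writing $f_1=(F_1+\i F_2)/2$ as in \eqref{Ftta_Calc}, equation \eqref{uzero_eq} for $v$ reads $\dba v_1+\del v_{-1}+\sigma_a v_0=0$; since $v$ is real ($v_1=\ol{v_{-1}}$), the subcritical hypothesis \eqref{sub_cond} permits dividing by $\sigma_a$ to get $v_0=-2\re(\del v_{-1})/\sigma_a$ in $\OM$. Substituting into \eqref{source_syseq} for $v$,
\begin{align*}
f_1 &= \dba v_0+\del v_{-2}+[a-k_{-1}]v_{-1} \\
    &= \del v_{-2}+[a-k_{-1}]v_{-1}-2\,\dba\!\left(\frac{\re(\del v_{-1})}{\sigma_a}\right),
\end{align*}
so that $\bF=\langle 2\re f_1,\,2\im f_1\rangle$ is recovered in $\OM$. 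Together with $f_0$ from the first step this determines both sources.

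The step I expect to demand the most care is the bookkeeping that legitimizes treating $v=u-w$ as a genuine instance of the earlier results: one must confirm that $v$ is a bona fide $W^{3,p}$ solution of \eqref{TransportScatEq2} with a purely anisotropic source whose full vector part equals $\bF$, and that $v|_{\Gamma_+}$ is indeed pinned down by the two prescribed data sets. Everything else is a direct appeal to results already in hand: Theorem \ref{main_divfree} for $f_0$, Theorem \ref{Th:LuFs} for the non-positive Fourier modes of $v$, and the explicit subcritical formulas of the Remark for $v_0$ and $f_1$. As an alternative to using Theorem \ref{main_divfree}, once $\bF$ (hence the Hodge potential $\varphi$ of \eqref{hodge}) is known, one may instead solve the Poisson problem \eqref{reconst_uzero1} for the original $u$ to get $u_0$ and then read off $f_0=2\re[\del u_{-1}]+[a-k_0]u_0$ from \eqref{uzero_eq}.
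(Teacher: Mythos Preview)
Your proposal is correct and follows essentially the same route as the paper: split the problem by linearity into an isotropic piece (source $f_0$, data $g_{\scaleto{f_0,\bzero}{4.75pt}}$) and a purely anisotropic piece (source $\btheta\cdot\bF$, data $g_{\scaleto{f_0,\bF}{4.75pt}}-g_{\scaleto{f_0,\bzero}{4.75pt}}$), recover $f_0$ from the first via the mechanism of Theorem~\ref{Th:LuFs}/\ref{main_divfree}, and recover $\bF$ from the second using the subcritical formula $v_0=-2\re(\del v_{-1})/\sigma_a$ together with \eqref{source_syseq}. The paper's proof differs only cosmetically: it names the two auxiliary solutions $v$ and $w$ in the opposite order and redoes the isotropic recovery inline (solving the Poisson problem \eqref{Transport_v0} for $v_0$) rather than citing Theorem~\ref{main_divfree} as a black box.
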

	\begin{proof}
		Let $u$ be the solution of the boundary value problem \eqref{TransportScatEq2} and let $\bu= \langle u_{0}, u_{-1}, u_{-2}, ... \rangle $  be the sequence valued map of its non-positive Fourier modes.
		Since the scalar field $f_0 \in W^{3,p}(\OM;\BR),\,p > 4$, and isotropic vector field $\bF \in W^{3,p}(\OM;\BR^2), \,p > 4$, then the anisotropic source $f = f_0 + \btheta \cdot \bF $ belong to $W^{3,p}(\OM \times \sph)$ for $p > 4$. By applying Theorem \ref{u_reg_Wp} (iii), we have $u \in W^{3,p}(\Omega\times\sph), \,p > 4$. Moreover, by the Sobolev embedding,
		$ W^{3,p}(\Omega\times\sph) \subset C^{2,\mu}(\ol \OM\times\sph)$ with 
		$\mu =1-\frac{2}{p}>\frac{1}{2}$, we have  $u\in C^{2,\mu}(\ol \OM\times\sph)$,
		and thus, by \cite[Proposition 4.1 (ii)]{sadiqTamasan01}, the sequence valued map $\bu \in Y_{\mu}(\Gam)$. 
		
		We consider the boundary value problems 
		\begin{align}\label{bvp_v} 
			\btheta \cdot \nabla v +av -Kv&= f_0, \qquad  \text{subject to } \quad v\lvert_{\Gamma_-}=0,   \quad  v\lvert_{\Gamma_+}=g_{\scaleto{f_0,\bzero}{4.75pt}}, \quad \text{and}	\\ 
		\label{bvp_w} 
			\btheta \cdot \nabla w +aw -Kw&= \btheta \cdot \bF, \quad  \text{subject to } \quad w\lvert_{\Gamma_-}=0,   \quad 
			w\lvert_{\Gamma_+}=\widetilde{g}:= g_{\scaleto{f_0,\bF}{4.75pt}}- g_{\scaleto{f_0,\bzero}{4.75pt}}.
		\end{align}
		Then $u = v+w$ satisfy the  boundary value problem \eqref{TransportScatEq2}.
		
		We consider first the boundary value problem \eqref{bvp_v}, and will reconstruct the scalar field $f_0$ from the  given boundary data $g_{\scaleto{f_0,\bzero}{4.75pt}}$. 
		
		If $\ds \sum_{n \in \BZ} v_{n}(z) e^{\i n\tta}$ is the Fourier series  expansion in the angular variable $\btheta$ of a solution $v$ of boundary value problem  \eqref{bvp_v}, then,  by identifying the Fourier modes of the same order, the equation in \eqref{bvp_v} reduces to the system:
		\begin{align}\label{vzero_eq1}
			\overline{\del} \ol{v_{-1}}(z)+\del v_{-1}(z) + [a(z)-k_0(z)]v_{0}(z)&=f_0(z), \\ 
			\label{v_finsy1s}
			\dba v_{-n}(z) +\del v_{-n-2}(z) + [a(z)-k_{-n-1}(z)]v_{-n-1}(z) &= 0,\quad 0 \leq n\leq M-1, \\ \label{v_infinite_sys1}
			\dba v_{-n}(z) +\del v_{-n-2}(z) + a(z)v_{-n-1}(z) &=0,\qquad n\geq M,
		\end{align}
		and let $\bv= \langle v_{0}, v_{-1}, v_{-2}, ... \rangle $  be the sequence valued map of its non-positive Fourier modes.
		
		By Theorem \ref {Th:LuFs}, from data $g_{\scaleto{f_0,\bzero}{4.75pt}}$,  the sequence $ L\bv = \langle v_{-1}, v_{-2}, ... \rangle$ is determined in $\OM$. Moreover, 
		as \eqref{v_finsy1s} holds also for $n=0$ ($f_1=0$ in this case), the mode $v_0$ is also determined in $\OM$ by  solving the Dirichlet problem for the  Poisson equation 
		\begin{subequations} \label{Transport_v0}
			\begin{eqnarray} \label{Poisson_v_0}
			\Delta v_{0} &=& -4\del^2 v_{-2} -4\del \left[ (a- k_{-1}) v_{-1} \right],\\  
			\label{v0_Gam} 
			v_{0} \lvert_{\Gam} &=& g_{0},
			\end{eqnarray} where the right hand side of \eqref{Transport_v0} is known.
		\end{subequations} 
		
		Thus, using modes $v_0$ and $v_{-1}$, the isotropic scalar source $f_0$ is recovered in $\OM$ by 
		\begin{align} \label{fsource_Scatpoly}
			f_{0}(z) = 2 \re \left(\del v_{-1}(z)\right) + \left( a(z)-k_0(z) \right) v_0(z),\quad z\in\OM.
		\end{align}
		
		Next, we consider the boundary value problem \eqref{bvp_w}, and will reconstruct the vector field $\bF$ from the  given boundary data $\widetilde{g} = g_{\scaleto{f_0,\bF}{4.75pt}}- g_{\scaleto{f_0,\bzero}{4.75pt}}$. 
		
		If $\ds \sum_{n \in \BZ} w_{n}(z) e^{\i n\tta}$ is the Fourier series  expansion in the angular variable $\btheta$ of a solution $w$ of the boundary value problem \eqref{bvp_w}, then,  by identifying the Fourier modes of the same order, the equation in \eqref{bvp_w} reduces to the system:
		\begin{align}\label{wzero_eq11}
			\overline{\del} \ol{w_{-1}}(z)+\del w_{-1}(z) + [a(z)-k_0(z)]w_{0}(z)&= 0, \\ \label{w_syseq1}
			\overline{\del} w_{0}(z)+\del w_{-2}(z) + [a(z)-k_{-1}(z)]w_{-1}(z)&=\left( F_{1}(z)+\i F_{2}(z)\right) /2, \\ \label{w_finsy1s}
			\dba w_{-n}(z) +\del w_{-n-2}(z) + [a(z)-k_{-n-1}(z)]w_{-n-1}(z) &= 0,\quad 1 \leq n\leq M-1, \\ \label{w_infinite_sys1}
			\dba w_{-n}(z) +\del w_{-n-2}(z) + a(z)w_{-n-1}(z) &=0,\qquad n\geq M,
		\end{align} and let $\bw= \langle w_{0}, w_{-1}, w_{-2}, ... \rangle $  be the sequence valued map of its non-positive Fourier modes.
		
			By Theorem \ref {Th:LuFs}, from data $\widetilde{g}$,  the sequence $ L\bw=  \langle w_{-1}, w_{-2}, ... \rangle $ is determined in $\OM$. 
			
		Using the subcriticality condition \eqref{sub_cond}: $\ds 
		\sigma_a (z) =a(z)-k_{0}(z)  >0, $ 
		and equation \eqref{wzero_eq11}, we define 
		\begin{align}\label{w_zero_eq}
			w_{0}(z)&:= -\frac{2\re \del w_{-1}(z)}{a(z)-k_{0}(z)} = -\frac{2\re \del w_{-1}(z)}{\sigma_a (z)}.
		\end{align}
		
		The real valued vector field $\bF = \langle 2 \re {f_1}, 2\im{f_1} \rangle$ is recovered in $\OM$ by 
		\begin{align} \label{fsource_Scatpoly_w}
			f_{1}(z) = \overline{\del} w_{0}(z) + \del w_{-2}(z) + \left[ a(z)-k_{-1}(z) \right] w_{-1}(z),\quad z\in\OM.
		\end{align}
	\end{proof}
\vspace{-0.5cm}
	\section{When can the data coming from two sources be mistaken for each other ?} \label{sources_mistaken}
	
	In this section we will address when the data coming from two different linear anisotropic field sources can be mistaken.

	In the theorem below the data are assuming the same attenuation $a$ and scattering coefficient $k$.
	\begin{theorem}\label{Isotropic_Anisotropic_datarelation}
	
	(i) Let $a\in C^{3}(\ol\OM)$,  $k\in C^{3}(\ol\OM \times \sph)$ be real valued, 
	with $\sigma_a = a -k_0 >0$, and $f_0, \widetilde{f} \in W^{3,p}( \OM)$, $p > 4$ be real valued with $
	\displaystyle( f_0 -\widetilde{f}) / \sigma_a  \in C_{0}(\ol \OM)$. Then
	$ \bF:= \displaystyle \widetilde{\bF}+\nabla \left ( \frac{f_0 -\widetilde{f}}{\sigma_a} \right )$ is a real valued vector field such that the data $g_{f_0,\bF}$ coming from the linear anisotropic source  $f_0 + \btheta \cdot \bF$,  is the same as data $g_{\widetilde{f}, \tilde{\bF}}$ coming from a different linear anisotropic source $\widetilde{f} + \btheta \cdot \widetilde{\bF}:$
	\begin{align*}
	g_{f_0,\widetilde{\bF}+\nabla \left ( \frac{f_0 -\widetilde{f}}{\sigma_a} \right )}= g_{\widetilde{f}, \widetilde{\bF}} .
	\end{align*}

	(ii) Let $a, k_0,k_{-1},...,k_{-M}\in C^{3}(\ol\OM)$  be real valued with $\sigma_a = a -k_0 >0$.
	 Assume that there are real valued linear anisotropic sources  $f_0 + \btheta \cdot \bF$ and  $\widetilde{f} + \btheta \cdot \widetilde{\bF}$, with isotropic fields $f_0,\widetilde{f}  \in W^{3,p}( \OM)$, $p > 4$, and vector fields $\bF,\widetilde{\bF} \in W^{3,p}(\OM;\BR^2)$, $p > 4$. 
	 If the data $g_{f_0,\bF}$ of the linear anisotropic source  $f_0 + \btheta \cdot \bF$ equals the data $g_{\widetilde{f}, \widetilde{F}}$ of the linear anisotropic source  $\widetilde{f} + \btheta \cdot \widetilde{\bF}$.
	   Then  
	$\bF =\widetilde{\bF} + \displaystyle \nabla \left ( \frac{f_0 - \widetilde{f}}{\sigma_a} \right )$.
\end{theorem}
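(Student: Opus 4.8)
The plan is to treat parts (i) and (ii) as converses of one another, built on the observation that a \emph{direction-independent} function $\psi=\psi(z)$ acts as a gauge for the radiative transport equation $\btheta\cdot\nabla u+au-Ku=f$, $u|_{\Gamma_-}=0$: if $\psi|_{\Gamma}=0$, then replacing $u$ by $u+\psi$ leaves both boundary traces unchanged while replacing the source $f_0+\btheta\cdot\bF$ by $(f_0+\sigma_a\psi)+\btheta\cdot(\bF+\nabla\psi)$. The gauge reconciling the two sources will be $\psi:=(f_0-\widetilde f)/\sigma_a$; since $\sigma_a=a-k_0\in C^3(\ol\OM)$ is continuous and strictly positive on the compact set $\ol\OM$ it is bounded below by a positive constant, so $1/\sigma_a\in C^3(\ol\OM)$ and $\psi\in W^{3,p}(\OM)$. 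The computation behind the gauge property uses two facts: the scattering operator acts on a direction-independent argument by $K\psi(z,\btheta)=\psi(z)\int_{\sph}k(z,\btheta\cdot\btheta')\,d\btheta'=k_0(z)\psi(z)$, with $k_0$ the isotropic part of $k$, so $(a-K)\psi=\sigma_a\psi$; and $(\btheta\cdot\nabla)\psi=e^{-\i\tta}\dba\psi+e^{\i\tta}\del\psi=\btheta\cdot\nabla\psi$ involves only the $\pm1$ angular modes.

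For part (i) I would argue constructively. Let $\widetilde u$ be the solution of the transport boundary value problem with source $\widetilde f+\btheta\cdot\widetilde{\bF}$, and set $u:=\widetilde u+\psi$, viewing $\psi$ as a function on $\OM\times\sph$ constant in $\btheta$. Applying $\btheta\cdot\nabla+a-K$ to $\psi$ and using the two facts above, $(\btheta\cdot\nabla+a-K)\psi=\sigma_a\psi+\btheta\cdot\nabla\psi=(f_0-\widetilde f)+\btheta\cdot(\bF-\widetilde{\bF})$, the last step by the definitions of $\psi$ and of $\bF=\widetilde{\bF}+\nabla\psi$. Adding this to the equation satisfied by $\widetilde u$ gives $(\btheta\cdot\nabla+a-K)u=f_0+\btheta\cdot\bF$, and since $\psi\in C_0(\ol\OM)$ vanishes on $\Gamma$ we get $u|_{\Gamma_-}=\widetilde u|_{\Gamma_-}=0$. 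Thus $u$ solves the transport boundary value problem with source $f_0+\btheta\cdot\bF$ (and, under the well-posedness assumption in force, is its unique solution), whence $u|_{\Gamma_+}=\widetilde u|_{\Gamma_+}+\psi|_{\Gamma_+}=\widetilde u|_{\Gamma_+}$, i.e. $g_{f_0,\bF}=g_{\widetilde f,\widetilde{\bF}}$.

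For part (ii) I would run the same computation backwards, with Theorem~\ref{Th:LuFs} supplying the starting point. Let $u$ and $\widetilde u$ be the solutions of \eqref{TransportScatEq2} for the two sources; by Theorem~\ref{u_reg_Wp}(iii) and Sobolev embedding they are real-valued and lie in $C^{2,\mu}(\ol\OM\times\sph)$ with $\mu=1-2/p>1/2$. Equality of the outgoing data together with the vanishing incoming data gives $u|_{\Gamma}=\widetilde u|_{\Gamma}$, so Theorem~\ref{Th:LuFs} applied to each source yields $u-u_0=\widetilde u-\widetilde u_0$ in $\OM$; in particular $u_{-1}=\widetilde u_{-1}$ and $u_{-2}=\widetilde u_{-2}$. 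Subtracting the equations \eqref{uzero_eq} for $u$ and $\widetilde u$, and using that $u$ is real (so $\dba u_{1}+\del u_{-1}=2\re(\del u_{-1})$) together with $u_{-1}=\widetilde u_{-1}$, gives $\sigma_a(u_0-\widetilde u_0)=f_0-\widetilde f$, i.e. $u_0-\widetilde u_0=\psi$. Subtracting the equations \eqref{source_syseq} and using $u_{-1}=\widetilde u_{-1}$, $u_{-2}=\widetilde u_{-2}$ gives $\dba(u_0-\widetilde u_0)=f_1-\widetilde f_1$, hence $\dba\psi=f_1-\widetilde f_1$. Writing $f_1=(F_1+\i F_2)/2$, $\widetilde f_1=(\widetilde F_1+\i\widetilde F_2)/2$, and $\dba=(\del_{x_1}+\i\del_{x_2})/2$, and separating real and imaginary parts, this says $\nabla\psi=\bF-\widetilde{\bF}$, that is $\bF=\widetilde{\bF}+\nabla((f_0-\widetilde f)/\sigma_a)$. (Incidentally $\psi|_{\Gamma}=u_0|_{\Gamma}-\widetilde u_0|_{\Gamma}=0$ because $u|_{\Gamma}=\widetilde u|_{\Gamma}$, so the hypothesis $(f_0-\widetilde f)/\sigma_a\in C_0(\ol\OM)$ of part (i) holds automatically here.)

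The steps are elementary once the framework is in place, so I do not anticipate a genuine obstacle; the one point requiring care is conceptual rather than technical. In part (i) it is essential that $K$ sends a direction-independent function to its isotropic multiple $k_0\psi$ — this is exactly what replaces $a$ by $\sigma_a=a-k_0$ in the denominator of the gauge $\psi$, and it is what makes the statement sharp. In part (ii) one must be disciplined about using only the information Theorem~\ref{Th:LuFs} actually provides (the nonzero angular modes $u_{-1},u_{-2},\dots$, and the solenoidal part of $\bF$), and recover the relation between the zeroth modes, and thence between the sources, solely from the two scalar identities \eqref{uzero_eq} and \eqref{source_syseq}.
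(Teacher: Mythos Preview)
Your proposal is correct and, for part~(i), essentially identical to the paper's argument: both define $u:=\widetilde u+(f_0-\widetilde f)/\sigma_a$ and verify directly that this solves the transport problem with source $f_0+\btheta\cdot\bF$, using that $K\psi=k_0\psi$ for angularly constant $\psi$ and that $(f_0-\widetilde f)/\sigma_a$ vanishes on $\Gamma$.

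For part~(ii) your route differs from the paper's in organization rather than substance. The paper re-runs the $L^2$-analytic machinery from scratch: it shows $L^M\bu=L^M\bw$ via the Bukhgeim--Cauchy operator and uniqueness of $L^2$-analytic extensions, then propagates $u_{-n}=w_{-n}$ down to $n=1$ by solving a cascade of homogeneous $\dba$-Cauchy problems, and only then subtracts the mode-$0$ and mode-$(-1)$ equations. You instead invoke Theorem~\ref{Th:LuFs} as a black box to get $u_{-n}=\widetilde u_{-n}$ for all $n\ge 1$ in one stroke, and then perform the same two subtractions. Your approach is more economical and avoids repeating the content of Theorem~\ref{Th:LuFs}; the paper's approach is self-contained and makes explicit which structural features (the $L^2$-analyticity of $L^M\bu$, uniqueness for the $\dba$-Cauchy problem) drive the conclusion. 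The final algebraic step---extracting $\nabla\psi=\bF-\widetilde{\bF}$ from $\dba\psi=f_1-\widetilde f_1$ with $\psi$ real---is the same in both.
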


\begin{proof}
	(i) Assume $g_{\widetilde{f}, \widetilde{\bF}}$ is the data of some real valued anisotropic source $\widetilde{f} + \btheta \cdot \widetilde{\bF}$, i.e., it is the trace on $\Gam \times \sph$ of solutions $w$ to the stationary transport boundary value problem:
	\begin{equation}\label{transp_function} 
		\begin{aligned}
			\btheta \cdot \nabla w +aw -Kw&= \widetilde{f}+ \btheta \cdot \widetilde{\bF},\\
			w \lvert_{\Gam \times \sph} &= g_{\widetilde{f}, \widetilde{\bF}}, 
		\end{aligned} 
	\end{equation}
	where  the operator $\ds  [Kw] (z,\btheta) :=  \int_{\sph} k(z,\btheta \cdot \btheta')w(z,\btheta') d\btheta',\; $ for $ z \in \OM$  and $\btheta \in \sph$.
	
	For $\sigma_a = a-k_0$ with $\sigma_a >0$,  and isotropic real valued functions $\psi$ and $\sigma_a$, we note: 
			\begin{equation}\label{eq:Kfa}
				\begin{aligned}
					\left[K\frac{\psi}{\sigma_a}\right] (z,\btheta) &=  \int_{\sph} k(z,\btheta \cdot \btheta')\left [\frac{\psi}{\sigma_a}\right](z,\btheta') d\btheta' \\
					&=  \frac{\psi(z)}{\sigma_a(z)}\int_{\sph} k(z,\btheta \cdot \btheta') d\btheta' = \frac{\psi(z)}{\sigma_a(z)} k_0(z),
				\end{aligned} 
			\end{equation}where second equality use the fact that both $\psi$ and $\sigma_a$ are angularly independent functions.
	
	
	Let $\displaystyle u := w + (f_0 - \widetilde{f})/\sigma_a $ and $\bF :=\widetilde{\bF} +\displaystyle \nabla \left ( \frac{f_0 - \widetilde{f}}{\sigma_a} \right )$. Then
	\begin{align*}
		\btheta \cdot \nabla &u +au -Ku= \btheta\cdot\nabla\left(w+\frac{f_0 - \widetilde{f}}{\sigma_a}\right)+a\left(w+\frac{f_0 - \widetilde{f}}{\sigma_a}\right) - K\left(w + \frac{f_0 - \widetilde{f}}{\sigma_a}\right)\\
		&=\btheta\cdot \nabla w + aw-Kw - \left(\frac{a}{\sigma_a}\right) \widetilde{f}+ \left(\frac{k_0}{\sigma_a}\right)\widetilde{f}+ \left(\frac{a}{\sigma_a}\right)f_0- \left(\frac{k_0}{\sigma_a}\right)f_0 +\btheta\cdot \nabla\left(\frac{f_0 -\widetilde{f}}{\sigma_a}\right)\\
		&= \left(1-\frac{a}{\sigma_a}+ \frac{k_0}{\sigma_a}\right)  \widetilde{f} +\left(\frac{a-k_0}{\sigma_a}\right)f_0 +\btheta\cdot \left(\widetilde{\bF} +\nabla\left(\frac{f_0 -\widetilde{f}}{\sigma_a}\right)\right)= f_0 + \btheta \cdot\bF,
	\end{align*} 
	where the second equality uses the linearty of $K$ and \eqref{eq:Kfa}, the last equality uses \eqref{transp_function},  and the definition of $\bF$. Moreover, since $f_0-\widetilde{f} /\sigma_a$ vanishes on $\Gam$, we get
	\begin{align*}g_{f_0,\bF}=u\lvert_{\Gam \times \sph} = w\lvert_{\Gam\times \sph}+\left.\frac{f_0-\widetilde{f}}{\sigma_a}\right\lvert_{\Gam}= w\lvert_{\Gam\times \sph}=g_{\widetilde{f}, \widetilde{\bF}}.
	\end{align*}
	
	(ii) Let $f_0 + \btheta \cdot \bF$, be the real valued linear anisotropic source  with isotropic field $f_0 \in W^{3,p}( \OM)$, $p > 4$, and vector field $\bF \in W^{3,p}(\OM;\BR^2)$, $p > 4$. 
	If the data $g_{f_0,\bF}$ of the linear anisotropic source  $f_0 + \btheta \cdot \bF$ equals data $g_{\widetilde{f}, \widetilde{\bF}}$ of some real valued $\widetilde{f},\widetilde{\bF} \in W^{3,p}(\OM)$, $p > 4$, i.e. $$g_{\widetilde{f}, \widetilde{\bF}}=g=g_{f_0,\bF}.$$
	Then by Theorem \ref{u_reg_Wp} (iii),  there exist  $u,w\in  W^{3,p}(\Omega\times\sph)$ solutions to the corresponding transport equations
	\begin{align}\label{bvp_uw} 
		\btheta \cdot \nabla u +au -Ku&= f_0 +\btheta \cdot \bF	,\qquad  \text{and} \quad  \btheta \cdot \nabla w +aw -Kw= \widetilde{f}+ \btheta \cdot \widetilde{\bF}
	\end{align} 
	respectively, subject to
	\begin{align*}
		u \lvert_{\Gam \times \sph} = g = w \lvert_{\Gam \times \sph}.
	\end{align*}
	Moreover, by the Sobolev embedding, $u,w\in C^{2,\mu}(\ol \OM\times\sph)$ with 
	$\mu =1-\frac{2}{p}>\frac{1}{2}$, and the corresponding sequences of non-positive Fourier modes $\{u_{-n}\}_{n\geq 0}$ of $u$ satisfy
	\begin{align}\label{uzero_eq1}
		\overline{\del} \ol{u_{-1}}(z)+\del u_{-1}(z) + [a(z)-k_0(z)]u_{0}(z)&=f_0(z), \\ \label{u_syseq1}
		\overline{\del} u_{0}(z)+\del u_{-2}(z) + [a(z)-k_{-1}(z)]u_{-1}(z)&= \left( F_{1}(z)+i F_{2}(z)\right) /2, \\ \label{u_finsy1s}
		\dba u_{-n}(z) +\del u_{-n-2}(z) + [a(z)-k_{-n-1}(z)]u_{-n-1}(z) &= 0,\quad 1 \leq n\leq M-1, \\ \label{u_infinite_sys1}
		\dba u_{-n}(z) +\del u_{-n-2}(z) + a(z)u_{-n-1}(z) &=0,\qquad n\geq M,
	\end{align}
	whereas the non-positive Fourier modes $\{w_{-n}\}_{n\geq 0}$ of $w$ satisfy
	\begin{align}\label{wzero_eq1}
		\overline{\del} \ol{w_{-1}}(z)+\del w_{-1}(z) + [a(z)-k_0(z)]w_{0}(z)&= \widetilde{f}(z), \\ \label{w_syseq1}
		\overline{\del} w_{0}(z)+\del w_{-2}(z) + [a(z)-k_{-1}(z)]w_{-1}(z)&= \left( \widetilde{F_{1}}(z)+i\widetilde{F_{2}}(z)\right) /2, \\ \label{w_finsy1s}
		\dba w_{-n}(z) +\del w_{-n-2}(z) + [a(z)-k_{-n-1}(z)]w_{-n-1}(z) &= 0,\quad 1 \leq n\leq M-1, \\ \label{w_infinite_sys1}
		\dba w_{-n}(z) +\del w_{-n-2}(z) + a(z)w_{-n-1}(z) &=0,\qquad n\geq M.
	\end{align}
	Furthermore, by \cite[Proposition 4.1 (ii)]{sadiqTamasan01}, the corresponding sequence valued   $\bu = \langle u_0, u_{-1}, \cdots \rangle \in Y_{\mu}(\Gam)$, and  $\bw = \langle w_0, w_{-1}, w_{-2},\cdots \rangle \in Y_{\mu}(\Gam)$ with $\mu >\frac{1}{2}$.
	
	Since the boundary data $g$ is the same $u|_{\Gam\times\sph}=w|_{\Gam\times\sph}$, we also have
	\begin{align}\label{bddIdentity}
		\bu \lvert_\Gam = \bg =\bw \lvert_\Gam.
	\end{align}
	
	We claim that the systems \eqref{u_infinite_sys1} and \eqref{w_infinite_sys1} subject to the identity \eqref{bddIdentity}, 
	yield
	\begin{align}\label{un=wn_n>M}
		L^M\bu(z) = L^M\bw(z),\quad z\in\OM.
	\end{align}
    The shifted sequence valued maps $L^M\bu= \langle u_{-M},u_{-M-1},...\rangle $, and
    $L^M\bw=(w_{-M},w_{-M-1},...)$, respectively, solves systems \eqref{u_infinite_sys1}, and  \eqref{w_infinite_sys1}.  Then the sequence valued map $L^M\bv:=\langle v_{-M}, v_{-M-1}, ...\rangle$, and $L^M\brho = \langle \rho_{-M},\rho_{-M-1},...\rangle $ are defined by 
\begin{align}\label{recover_LMv_rho}
	L^M\bv &=  e^{G}\B \left (L^M e^{-G} \bg\right), \quad L^M\brho  = e^{G}\B \left (L^M e^{-G} \bg \right),
\end{align} where $\B$ is 	the Bukhgeim-Cauchy operator  in \eqref{BukhgeimCauchyFormula}, and  $e^{\pm G}$ are  the operators in \eqref{eGop}.

%
	In particular, $L^M\bv$ and $ L^M\brho$ are $L^2$-analytic, and coincide at the boundary $\Gamma$.
	By uniqueness of $L^2$-analytic functions with a given trace, they coincide inside:
	\begin{align}\label{vn=pn_n>M}
		L^M\bv(z) =L^M\brho(z),\quad \text{for} \; z\in \OM.
	\end{align}
	Using the operator $e^{- G}$ in \eqref{eGop}, we conclude that
	\begin{align*}
		L^M\bu(z) &=  e^{-G}L^M \bv(z), \quad  L^M\bw(z) =  e^{-G}L^M \brho(z), \quad z \in \OM.
	\end{align*}Thus \eqref{un=wn_n>M} holds.
	
	Subjecting \eqref{u_finsy1s} and \eqref{w_finsy1s} to the boundary conditions \eqref{bddIdentity}, we claim that 
	\begin{align}\label{un=wn_n<M}
		u_{-n}(z) = w_{-n}(z), \quad z\in\OM, \quad \text{for all }\; 1\leq n\leq M-1.
	\end{align}
	
	Define 
	\begin{align}\label{psi_defn}
		\psi_{-j}:=u_{-j}- w_{-j}, \quad \text{for} \;  j \geq 1,
	\end{align}
	and note that by \eqref{un=wn_n>M}, we have  
	\begin{align}\label{psi_>M}
		\psi_{-j}=0, \quad \text{for} \; j \geq M.
	\end{align}
	By subtracting  \eqref{w_finsy1s} from \eqref{u_finsy1s}, and using \eqref{psi_defn}, and \eqref{bddIdentity}, we have 
	\begin{subequations} \label{Dirichlet_inhomDbar_psi}
		\begin{align} \label{dbapsi_eq}
			\dba \psi_{-M+j} &= -\del \psi_{-M+j-2} - \left[ (a- k_{-M+j-1}) \psi_{-M+j-1} \right], \quad 1\leq j \leq M-1,\\ 
			\label{psi_gam}  \psi_{-M+j} \lvert_{\Gamma} &= 0.
		\end{align}
	\end{subequations} 
	For the mode $\psi_{-M+1}$ (when $j=1$), the right hand side of \eqref{dbapsi_eq} contains modes $\psi_{-M-1}$ and $\psi_{-M}$ which are both zero by \eqref{psi_>M}. Thus, the mode $\psi_{-M+1}\equiv0$ is the unique solution to the Cauchy problem for the $\dba$-equation, 
	\begin{subequations} \label{Cauchyproblem}
		\begin{align} \label{dba_Cauchyproblem}
			\dba \Psi &= 0, \quad \text{in} \; \OM,\\ \label{u_Lambda_data}
			\Psi &= 0, \quad \text{on} \; \Gamma.
		\end{align}
	\end{subequations}  
	We then solve repeatedly \eqref{Dirichlet_inhomDbar_psi} starting for $j=2,...,M-1$, where the right hand side of \eqref{dbapsi_eq} in each step is zero, yielding the Cauchy problem \eqref{Cauchyproblem} for each subsequent modes, and thus, resulting in the  recovering of the modes $\psi_{-M+1} =\psi_{-M+2}= \cdots \psi_{-2}=\psi_{-1} \equiv 0$ in $\OM$. Therefore, establishing \eqref{un=wn_n<M}.

	By subtracting \eqref{wzero_eq1} from \eqref{uzero_eq1}, we obtain
	$\ds 		(a-k_0)(u_0-w_0)= f_0-\widetilde{f}$, and thus  $\ds 	u_0-w_0=\frac{f_0-\widetilde{f}}{\sigma_a}$.
   	Moreover, by subtracting \eqref{w_syseq1} from \eqref{u_syseq1} and using \eqref{un=wn_n>M} and \eqref{un=wn_n<M}, yields $\ds 2\ol\del(u_0-w_0)= (F_1-\widetilde{F_1})+\i (F_2-\widetilde{F_2}).$
   Since both $u_0$ and $w_0$
   are real valued we see that $\ds 
   \bF-\widetilde{\bF}=\nabla(u_0-w_0)$, and we have 
	\begin{align*}
		\bF =\widetilde{\bF} + \nabla \left ( \frac{f_0-\widetilde{f}}{\sigma_a} \right ).
	\end{align*}
	
\end{proof}

\begin{remark}
Note that in Theorem \ref{Isotropic_Anisotropic_datarelation}(i), the assumption on scattering kernels of finite Fourier content in the angular variable is not assumed, and the result holds for a  general scattering kernels which depends polynomially on the angle between the directions.
\end{remark}
	
%
%
%
	
	
	\section*{Acknowledgment}
	The work of D.~Omogbhe and K.~ Sadiq  were supported by the Austrian Science Fund (FWF), Project P31053-N32 and by the FWF Project F6801–N36 within the Special Research Program SFB F68 “Tomography Across the Scales”. 
	

\end{document}